\algrenewcommand{\algorithmiccomment}[1]{\hfill\(\triangleright\)}
\newtheorem{property}{Property}[section]
\newtheorem{remark}{Remark}[section]
\newcommand{\Res}{\mathrm{Res}}
 \renewcommand{\C}{\mathbb{C}}  
\newcommand{\Z}{\mathbb{Z}}
\newcommand{\Q}{\mathbb{Q}}
\newcommand{\R}{\mathbb{R}}
\newcommand{\A}{{\overline{\Q}}}
\newcommand{\e}{\mathrm{e}}
\newcommand{\h}{\mathrm{h}}
\renewcommand{\v}{\nu}
\renewcommand{\L}{\mathrm{L}}
\renewcommand{\log}{{\mathrm{log}_2}}
\newcommand{\M}{\mathrm{M}}
\renewcommand{\O}{\mathcal{O}}
\newcommand{\K}{\mathcal{K}}
\renewcommand{\imath}{{\rm i}}
\renewcommand{\H}{\mathrm{H}}
\renewcommand{\i}{{\imath}}
\newcommand{\disc}{\mathrm{disc}}
\renewcommand{\ln}{{\mathrm{log}}}
\renewcommand{\zeta}{{\mu}}
\newcommand{\tha}{{\vartheta}}
\begin{document}
\fancyhead{}
\title{Explicit Bounds for Linear Forms in the Exponentials of Algebraic Numbers}

\author{Cheng-Chao Huang}
\affiliation{
\institution{Institute of Software, Chinese Academy of Sciences}
\country{People's Republic of China}
}
\email{chengchao@nj.iscas.ac.cn}



\renewcommand{\shortauthors}{Cheng-Chao Huang}

\begin{abstract}
In this paper,
we study linear forms
\[
\lambda = \beta_1\e^{\alpha_1}+\cdots+\beta_m\e^{\alpha_m},
\]
where $\alpha_i$ and $\beta_i$ are algebraic numbers.
An explicit lower bound for the absolute value of $\lambda$ is proved, which is derived from
``th{\'e}or{\`e}me de Lindemann--Weierstrass effectif''
via constructive methods in algebraic computation. 
Besides, the existence of $\lambda$ with an explicit upper bound is established on the result of 
counting algebraic numbers. 
\end{abstract}

\begin{CCSXML}
<ccs2012>
   <concept>
       <concept_id>10010147.10010148.10010149.10010156</concept_id>
       <concept_desc>Computing methodologies~Number theory algorithms</concept_desc>
       <concept_significance>500</concept_significance>
       </concept>
\end{CCSXML}
\ccsdesc[500]{Computing methodologies~Number theory algorithms}

\keywords{Lindemann--Weierstrass theorem, algebraic computation, 
transcendental number theory, computational number theory}

\maketitle

\section{Introduction}

The study of transcendental number theory originated from the attention of Liouville~\cite{liouville1844classes} to a class of numbers, viz. transcendental numbers that satisfy no algebraic equation with integer coefficients.
In 1844, the existence of transcendental numbers is shown for the first time
by Liouville numbers constructed in the form of series,
which violate the lower bound for the approximation of algebraic numbers:
For an irrational algebraic number $x$ of degree $n$, there exists a constant $c(x) > 0$ such that $|x-\tfrac{p}{q}|<\tfrac{c(x)}{q^n}$ holds for all integers $p$ and $q>0$.
In 1874, Cantor~\cite{cantor1874ueber} 
also showed the existence of transcendental numbers by proving that the set of algebraic numbers is countable while the set of real numbers is uncountable.

In the same period, the exponential function became a topic of interest in transcendental number theory.
In 1873, Hermite~\cite{hermite1874fonction} proved the transcendence of Euler's number $\e$
by using auxiliary functions. 
Subsequently, Lindemann~\cite{lindemann1882ueber} proved that $\e^\alpha$ is transcendental for nonzero algebraic numbers $\alpha$.
In particular, the transcendence of $\pi$ is shown
since $\e^{\pi \imath}$ is algebraic,
which gave the negative answer to the ancient Greek question --- ``Squaring the Circle''.

More generally, transcendental number theory is also concerned with the algebraic independence of numbers. A set of numbers $\alpha_1,\ldots,\alpha_m$ is algebraically independent over a number field $\K$ if there is no nonzero polynomial $P$ in $n$ variables with coefficients in $\K$ such that $P(\alpha_1,\ldots,\alpha_m) = 0$.
Hereby, the transcendence of a number is a special case of $\Q$-algebraic independence with $m = 1$.
On this route,
in 1874, Hermite~\cite{hermite1874fonction} considered the independence 
of exponentials and
proved the $\Q$-linear independence of
$\e^{\alpha_1},\ldots,\e^{\alpha_m}$ for distinct rational numbers $\alpha_1,\ldots,\alpha_m$.
In 1882, a more general statement 
was sketched by Lindemann~\cite{lindemann1882ueber}
and was later further proved rigorously by Weierstrass~\cite{weierstrass1885lindemann},
which is known as
Lindemann–Weierstrass theorem.
\begin{theorem}[Lindemann–Weierstrass]\label{thm:L2}
For any distinct algebraic numbers
$\alpha_1,\ldots,\alpha_m$ and
any nonzero algebraic numbers $\beta_1,\ldots,\beta_m$, we have
\[
\beta_1 \e^{\alpha_1}+\cdots+\beta_m \e^{\alpha_m} \neq 0.
\]
\end{theorem}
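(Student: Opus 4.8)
The plan is to establish the equivalent statement that $\e^{\alpha_1},\dots,\e^{\alpha_m}$ are linearly independent over $\A$, by Hermite's classical method of an auxiliary integral, after normalising the hypothetical relation in a sequence of reductions. So suppose, for a contradiction, that $\sum_{i=1}^m\beta_i\e^{\alpha_i}=0$ with the $\alpha_i$ distinct and --- after dropping vanishing terms --- every $\beta_i\neq0$. First I would pass to rational integer coefficients: choosing a primitive element $\theta$ of a number field containing all the $\beta_i$, of degree $h$ with conjugates $\theta=\theta^{(1)},\dots,\theta^{(h)}$, and writing $\beta_i=p_i(\theta)$ with $p_i\in\Q[x]$, the product $\prod_{k=1}^{h}\bigl(\sum_{i=1}^m p_i(\theta^{(k)})\e^{\alpha_i}\bigr)$ vanishes (its $k=1$ factor is the given relation), and expanding it and collecting equal exponents gives $\sum_\gamma c_\gamma\e^{\gamma}=0$ with each $c_\gamma$ symmetric in $\theta^{(1)},\dots,\theta^{(h)}$, hence rational; the exponent $\gamma$ that is $\prec$-largest (for a fixed total order $\prec$ on $\C$ compatible with addition) is attained only by the constant choice, with coefficient the nonzero norm $N_{\Q(\theta)/\Q}(\beta_{i^*})$, so the $c_\gamma$ are not all zero. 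Clearing denominators, I obtain $\sum_j b_j\e^{\gamma_j}=0$ with $b_j\in\Z$ not all zero and the $\gamma_j$ distinct algebraic numbers.

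Next I would make the exponent set conjugation-stable and then force a nonzero constant term. Enlarging $\{\gamma_j\}$ by the conjugates of its elements (taking zero coefficients there) and letting $G=\mathrm{Gal}(L/\Q)$ for a suitable Galois number field $L$, the product $\prod_{\sigma\in G}\bigl(\sum_j b_j\e^{\sigma\gamma_j}\bigr)$ vanishes; since the $b_j$ are $G$-fixed, expanding and collecting yields $\sum_j c_j\e^{\gamma_j}=0$ with $c_j\in\Z$, with $G$-stable exponent set, with the $c_j$ constant on $G$-orbits, and with not all $c_j$ zero (again by the $\prec$-largest exponent). Discarding the vanishing coefficients keeps the exponent set orbit-closed. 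Now multiply by the reflected form $\sum_j c_j\e^{-\gamma_j}$: since one factor is zero,
\[
0=\Bigl(\sum_j c_j\e^{\gamma_j}\Bigr)\Bigl(\sum_j c_j\e^{-\gamma_j}\Bigr)=\sum_i c_i^2+\sum_{i\ne j}c_ic_j\,\e^{\gamma_i-\gamma_j},
\]
whose constant term $\sum_i c_i^2$ is strictly positive --- it cannot cancel, since the $\gamma_i$ are distinct --- while the nonzero exponents $\gamma_i-\gamma_j$ form a $G$-stable set with integer coefficients again constant on $G$-orbits. After discarding vanishing coefficients I reach a normal form $C_0+\sum_{t=1}^{r}C_t\e^{\delta_t}=0$ with $C_0\in\Z\setminus\{0\}$, each $C_t\in\Z\setminus\{0\}$, the $\delta_t$ distinct and nonzero, $\{\delta_t\}$ stable under $G$, and the $C_t$ constant on $G$-orbits.

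For the contradiction I would run Hermite's argument on this normal form. Let $T\ge1$ be an integer with every $T\delta_t$ an algebraic integer, $p$ a large prime, and
\[
f(x)=\frac{T^{s}}{(p-1)!}\,x^{p-1}\prod_{t=1}^{r}(x-\delta_t)^{p},
\]
with $s$ a suitably large multiple of $p$ chosen so that $(p-1)!\,f\in\Z[x]$ and all values $f^{(l)}(0)$, $f^{(l)}(\delta_t)$ are algebraic integers; note $f$ has a zero of order $p-1$ at $0$ and order $p$ at each $\delta_t$, and $f\in\Q[x]$ since $\prod_t(x-\delta_t)\in\Q[x]$. The identity $\int_0^{z}\e^{z-u}f(u)\,du=\e^{z}\sum_{l}f^{(l)}(0)-\sum_{l}f^{(l)}(z)$, combined with $\sum_t C_t\e^{\delta_t}=-C_0$, shows that
\[
J:=\sum_{t=1}^{r}C_t\int_0^{\delta_t}\e^{\delta_t-u}f(u)\,du=-C_0\sum_{l}f^{(l)}(0)-\sum_{t}C_t\sum_{l}f^{(l)}(\delta_t)
\]
is a rational integer: terms with $l<p-1$ vanish at $0$ and terms with $l<p$ vanish at each $\delta_t$; terms with $l\ge p$ are divisible by $p$, using $(p-1)!\,f\in\Z[x]$ and the $G$-invariance of $\sum_t C_tf^{(l)}(\delta_t)$ (orbit-constancy of the $C_t$ and $f\in\Q[x]$); and, modulo $p$, only the term $C_0f^{(p-1)}(0)=\pm C_0\,T^{s-rp}\bigl(\prod_t(T\delta_t)\bigr)^{p}$ survives, with $\prod_t(T\delta_t)$ a nonzero rational integer, which is not divisible by $p$ once $p$ exceeds $|C_0|$, $T$ and $\bigl|\prod_t(T\delta_t)\bigr|$. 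Hence $|J|\ge1$. On the other hand, on each segment from $0$ to $\delta_t$ one has $|f|\le C^{p}/(p-1)!$ with $C$ independent of $p$, so $|J|\le\bigl(\sum_t|C_t|\bigr)\max_t\bigl(|\delta_t|\e^{|\delta_t|}\bigr)\,C^{p}/(p-1)!\to0$ as $p\to\infty$; for $p$ large this contradicts $|J|\ge1$, completing the proof.

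The step I expect to be the real obstacle is not the analytic estimate --- which is the standard prime-parameter argument, whose only nuisance is to keep $s$ large enough that each quantity asserted to be an integer genuinely is --- but the passage through the reductions, and above all the guarantee of a nonzero constant term after conjugation-symmetrising the exponents. The device that makes this work cleanly is to symmetrise over $G$ first and only afterwards multiply by the reflected form $\sum_j c_j\e^{-\gamma_j}$: the constant term $\sum_i c_i^2$ is then manifestly nonzero, while the difference exponents $\gamma_i-\gamma_j$ are automatically $G$-stable with orbit-constant integer coefficients.
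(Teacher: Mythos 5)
The paper states Theorem~\ref{thm:L2} without proof---it is the classical Lindemann--Weierstrass theorem, cited from Lindemann (1882) and Weierstrass (1885) and used as a black box for the quantitative analysis that follows---so there is no paper proof to compare against. Judged on its own, your proposal is a correct and well-organised rendition of the standard Hermite-style argument: two rounds of Galois symmetrisation (first over the coefficient field via the norm, then over the exponent field via $\prod_{\sigma\in G}$) reduce to a relation with rational integer coefficients constant on conjugate orbits; the reflection product $\bigl(\sum_j c_j\e^{\gamma_j}\bigr)\bigl(\sum_j c_j\e^{-\gamma_j}\bigr)$ manufactures a nonzero rational constant term $\sum_i c_i^2$ while preserving $G$-stability; and the auxiliary integral $J$ with the prime parameter $p$ produces a rational integer that is simultaneously at least $1$ (because $C_0 f^{(p-1)}(0)=\pm C_0\,T^{s-rp}(\prod_t T\delta_t)^p$ is prime to $p$ for large $p$, all other contributions being multiples of $p$) and $O(C^p/(p-1)!)$. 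The $\prec$-largest-exponent device is exactly what is needed to keep the symmetrised forms nonzero, and keeping coefficients orbit-constant is what makes the Galois traces $\sum_t C_t f^{(l)}(\delta_t)$ rational. The two places you wave your hands---choosing $s$ (it must grow linearly in $p$, say $s=(r+1)p$, so that $T^{s-rp}$ and the relevant derivative values are integral while $T^s\le (T^{r+1})^p$ still lets $C^p/(p-1)!\to 0$) and checking that those rational traces are actually integers---are the routine bookkeeping of every version of this proof, and you flag them honestly. This is a sound, self-contained proof of the theorem.
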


\paragraph{\bf Quantitative Aspects.}
Transcendental number theory also investigates transcendental numbers in a quantitative way.
Denote by $\lambda$ the linear forms of exponentials of algebraic numbers, i.e. $\lambda= \beta_1\e^{\alpha_1} + \cdots + \beta_m \e^{\alpha_m}$.
Notwithstanding $\lambda$ is nonzero, how far it is from zero is a challenging problem.
The study of the lower bound for such $\lambda$ has attracted the attention of mathematicians.

In 1929, Siegel~\cite{siegel2014einige} presented the lower bound for a special case of the $\Q$-linear forms $\lambda$ with integer exponents.
Specifically, let the linear forms $\lambda=a_0+a_1\e+\cdots +a_m\e^m$ with rational coefficients and $a=\max\{|a_0|,|a_1|,\ldots,|a_m|\}$.
Then there exists a positive number $c$ independent with $m$ and the coefficients, for any $a \ge a(m)$ we have
\[
|\lambda| \ge a^{-m-{cm^2\ln(m+1)}/{\ln\ln a}}.
\]
In 1931, this result has been further improved  by Mahler~\cite{mahler1932approximation}
to
\[
|\lambda| \ge a^{-m-{(\mu+\varepsilon)}/{\ln\ln a}}
\]
for any $\varepsilon>0$ and $c>c(\varepsilon)$, where 
$\mu = m\ln(m! \e)-(m+1)\ln\frac{m+1}{\e}+1$.



Upon the other hand,
there is an equivalent formulation of Lindemann–Weierstrass theorem~\cite{baker1990transcendental},
which demonstrates  $\e^{\alpha_1},\ldots,\e^{\alpha_m}$
are algebraically independent over $\Q$
when
$\alpha_1,\ldots,\alpha_m$
are $\Q$-linear independent.
Namely, $P(\e^{\alpha_1},\ldots,\e^{\alpha_m})\neq 0$
for any nonzero polynomial $P\in\A[x_1,\ldots,x_m]$.
From this perspective,
the $\Q$-linear independence of the exponents
provides useful properties to establish the lower bound for $|P(\e^{\alpha_1},\ldots,\e^{\alpha_m})|$.

In 1932, Mahler~\cite{mahler1932approximation} gave a nontrivial lower bound
for such polynomials:
For a polynomial $P\in\Z[x_1,\ldots,x_m]$
of degree $\le d$ and height $\le H$,
there exists a constant
$H_0=H_0(d,\alpha_1,\ldots,\alpha_m)$ such that
\[
\ln |P(\e^{\alpha_1},\ldots,\e^{\alpha_m})|
\ge -cd^m\ln H
\]
for $H\ge H_0$ and a certain constant $c=c(\alpha_1,\ldots,\alpha_m)$.
In 1994, Ably~\cite{ably1994version} improved the result by using criteria for algebraic independence~\cite{philippon1986criteres}: 
For a polynomial $P$ with rational coefficient,
\[
\ln |P(\e^{\alpha_1},\ldots,\e^{\alpha_m})|
\ge -cd^m(\ln H+\exp(Cd^m\ln(d+1))),
\]
where 
$c=2^{4m^3+18m^2+25m+4}m^{m^2+m+2}(4D+m+1)^{m+2}D^{m^2+m+1}$,
$D=[\Q(\alpha_1,\ldots,\alpha_m):\Q]$, and $C=C(\alpha_1,\ldots,\alpha_m)$.

\paragraph{\bf Effective Aspects.}
The concept of ``effective results'' in number theory means their content can be effectively computed. 
The effective results are important and practical, in which the computable constants can actually be applied to solve other problems.
However, it is usually difficult, in general, to prove such effective results.
In indirect forms of proof, the implied constant often cannot be made computable, and the result based on Landau notation only assert the existence for such constant.

Until 1998, Sert~\cite{sert1999version} further improved Ably's last result,
and gave the first (to the best of our knowledge) effective version of the Lindemann--Weierstrass theorem (``th{\'e}or{\`e}me de Lindemann--Weierstrass effectif'' in French),
which is a breakthroughs in transcendental number theory.
The theorem provide
a lower bound for $|P(\e^{\alpha_1},\ldots,\e^{\alpha_m})|$,
which 
can be computed in an effective manner. 


Back to the subject of this paper,
we focus on the linear forms
\[
\lambda = \beta_1\e^{\alpha_1}+\cdots+\beta_m\e^{\alpha_m},
\]
where $\alpha_i$ and $\beta_i$ are algebraic numbers of
bounded degrees and heights.
On the one hand, our purpose is to provide a nontrivial lower bound, by which
the absolute value of all such $\lambda$ are bounded from below.
One the other hand, we intend to give a nontrivial upper bound, 
there exists a nonzero $\lambda$ whose absolute value is bounded from above by it. Moreover, both the lower bound and the upper bound obtained are expressed in an explicit way.
For the former, we
construct an exponential polynomial $P$ equivalent to $\lambda$
such that the effective Lindemann--Weierstrass theorem can be applied.
To this end, we compute a linearly independent base of the field $\Q(\alpha_1,\ldots,\alpha_m)$, 
and the main technique here is related to the computation over algebraic extension.
For the latter,
we construct such $\lambda$ as the difference between 
two distinct linear forms.
By Dirichlet's pigeonhole principle,
we bound the difference
from above, that is established
on the result of counting algebraic numbers. 
The main results are stated as follows:

\begin{theorem}[Main Result A]\label{thm:maina}
For any distinct algebraic numbers $\alpha_1,\ldots,\alpha_m$ and nonzero algebraic numbers $\beta_1,\ldots,\beta_m$ with the maximal degree $d$ and the maximal Weil height $h$,
we have
\begin{equation*} 
\begin{aligned}
\ln \,|\beta_1 \e ^{\alpha_1}+\ldots+\beta_m\e^{\alpha_m} | 
\ge 
 -  \e^{8\delta\zeta}
 - r m^{\delta}\e^{6\delta^2\zeta}
\big( 
m h + \tfrac{39}{164}\zeta  + \e^{R}\big),
\end{aligned}
\end{equation*}
in which 
\begin{itemize}
     \item[$\cdot$] $\delta = d^{2m}$,
     \item[$\cdot$] $\zeta = md^{6m}(\tfrac{2h}{d}+3)$,
     \item[$\cdot$] $R=r' m^{\delta}\e^{6\delta^2\zeta} + r'' m^{\delta}\e^{6\delta^2\zeta} (\ln m + 6\delta\zeta) + 72 \e^{2\delta\zeta}$,
    \item[$\cdot$] $r=82(\tfrac{9}{2})^\delta \delta^{3\delta+2}$,
    \item[$\cdot$] $r'= 12 (\tfrac{9}{2})^\delta \delta^{3\delta +2}
+ 16(1+6 \delta^2)(\tfrac{9}{2})^\delta \delta^{3\delta} \ln(9\delta^3) \\
{\qquad + \, 80(\tfrac{9}{2})^\delta \delta^{3\delta +4}
(1+6\delta) \zeta}$,
    \item[$\cdot$] $r''=16(1+6\delta^2)(\tfrac{9}{2})^\delta \delta^{3\delta}$.
\end{itemize}
\end{theorem}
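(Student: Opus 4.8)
The plan is to reduce the linear form $\lambda=\sum_{i=1}^m\beta_i\e^{\alpha_i}$ to an exponential polynomial $P(\e^{\gamma_1},\dots,\e^{\gamma_r})$ whose exponents $\gamma_1,\dots,\gamma_r$ are $\Q$-linearly independent, so that Sert's effective Lindemann--Weierstrass theorem applies directly, and then to track every quantity (degree, height, number of monomials) through the reduction to make the final bound explicit. First I would compute a $\Q$-basis $\{\gamma_1,\dots,\gamma_r\}$ of the $\Q$-vector space spanned by $\alpha_1,\dots,\alpha_m$ inside the number field $\Q(\alpha_1,\dots,\alpha_m)$; writing each $\alpha_i=\sum_j c_{ij}\gamma_j$ with $c_{ij}\in\Q$, clearing denominators by a common integer $N$ and setting $x_j=\e^{\gamma_j/N}$, the form $\lambda$ becomes $P(x_1,\dots,x_r)$ where $P$ is a polynomial with algebraic coefficients $\beta_i$ and exponent vectors $(Nc_{i1},\dots,Nc_{ir})$. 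Because the $\gamma_j$ are $\Q$-linearly independent and the $\alpha_i$ are distinct, the monomials of $P$ are distinct, so $P$ is a nonzero polynomial; and the exponents $\gamma_j/N$ are again $\Q$-linearly independent, so the hypothesis of the effective theorem is met. The degree of $P$ is at most $N\max_{ij}|c_{ij}|$ and its height is controlled by $h$ together with the denominators appearing in the change of basis; each of these must be bounded in terms of $d$, $m$, $h$ alone.

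The key steps, in order, are: (1) bound $r\le$ (transcendence-related count, ultimately $\le\delta=d^{2m}$-type quantity) and bound the degree and height of the basis elements $\gamma_j$ and of the rational coordinates $c_{ij}$, using effective linear algebra over $\Q(\alpha_1,\dots,\alpha_m)$ — this is where the quantities $\delta$ and $\zeta$ in the statement come from, $\delta$ measuring the degree of the relevant extension and $\zeta$ the height blow-up; (2) translate these into bounds on the three parameters that Sert's theorem consumes, namely the degree $d(P)$, the (logarithmic) height $h(P)$, and the parameter measuring the arithmetic complexity of the exponents $\gamma_j/N$; (3) substitute into Sert's lower bound $\ln|P(\e^{\gamma_1/N},\dots,\e^{\gamma_r/N})|\ge -(\text{explicit function of }d(P),h(P),\dots)$ and simplify. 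The bookkeeping in (3) is routine but lengthy: one collects the doubly-exponential term $\e^{8\delta\zeta}$ from the leading constant in Sert's bound, the term $rm^\delta\e^{6\delta^2\zeta}$ from the product of the degree-power $d(P)^r$ with the constant $c$, and the inner factor $mh+\tfrac{39}{164}\zeta+\e^R$ from the height of $P$ plus Sert's additive correction term $\exp(Cd^m\ln(d+1))$, which itself unwinds into $R$. I expect the main obstacle to be step (1): obtaining \emph{explicit} bounds on the denominators and heights that arise when computing a linearly independent basis and re-expressing the $\alpha_i$, since naive Gaussian elimination over a number field can blow up heights badly, and one must argue carefully — e.g. via Siegel-type lemmas or minors-of-a-matrix estimates together with the Mahler-measure behavior of resultants — that $\delta$ and $\zeta$ as defined genuinely dominate all intermediate quantities.

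A secondary technical point worth isolating: one must verify that the nonvanishing of $P$ is preserved, i.e. that no cancellation occurs when passing from $\lambda$ to $P$. This follows because distinct $\alpha_i$ map to distinct exponent vectors $(Nc_{i1},\dots,Nc_{ir})$ — if two coincided, then $\alpha_i-\alpha_k=\sum_j(c_{ij}-c_{kj})\gamma_j=0$ with not all coefficients zero, contradicting $\Q$-linear independence of the basis — and the $\beta_i$ are nonzero, so the monomials of $P$ have nonzero coefficients and do not collide; hence $P\ne 0$ and Theorem~\ref{thm:L2} (in its equivalent polynomial form) already guarantees $P(\e^{\gamma_1/N},\dots)\ne 0$, while Sert's effective version supplies the quantitative lower bound. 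The remaining simplifications — replacing $r$ by its bound, absorbing lower-order terms into the stated constants $r,r',r''$, and checking the numerical coefficients $82$, $(\tfrac92)^\delta$, $\tfrac{39}{164}$, $72$ against Sert's explicit constants — are direct estimates with no conceptual difficulty.
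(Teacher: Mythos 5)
Your overall plan — reduce $\lambda$ to an exponential polynomial with $\Q$-linearly independent exponents and then invoke Sert's effective Lindemann--Weierstrass theorem — is the same high-level strategy as the paper, but the concrete reduction is different, and one missing step in your version is where the paper's first additive term $-\e^{8\delta\zeta}$ actually comes from.

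The paper does not compute an abstract $\Q$-basis of $\mathrm{span}_\Q\{\alpha_1,\dots,\alpha_m\}$ and invert a change-of-basis matrix. Instead it builds a primitive element $\theta$ of $\Q(\alpha_1,\dots,\alpha_m)$ by recursive application of Loos's SIMPLE algorithm (Section~\ref{sec:prim}), rescales it to an algebraic integer $\tha$, and then uses Lenstra's factorisation over $\Q(\tha)$ (Section~\ref{sect:fact}) to write each $\alpha_i=\tfrac{1}{T}p_i(\tha)$ with $p_i\in\Z[x]$ and a single common integer denominator $T$. The resulting basis is $\tfrac{1}{T},\tfrac{1}{T}\tha,\dots,\tfrac{1}{T}\tha^{d_\tha-1}$, whose $\Q$-linear independence is automatic (a power basis of the field), and whose heights and the size of the integers $c_{ij}$ come with ready-made explicit bounds (Lemma~\ref{lem:multi_simple_2}). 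This is precisely the device that dissolves the ``main obstacle'' you correctly identify in step (1): no Siegel-type lemma or minor-of-a-matrix argument over a number field is needed, because the bounds flow from resultants in SIMPLE and from Lenstra's explicit factor bound. Your approach (pick a maximal $\Q$-linearly independent subset, solve for the rest) is plausible in principle but you would still have to produce explicit denominator and height bounds for the solution of a linear system over $\Q(\alpha_1,\dots,\alpha_m)$, which is exactly the hard part; the paper's route avoids it entirely.

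There is also a genuine gap in your proposal: with exponent vectors $(Nc_{i1},\dots,Nc_{ir})$ you silently assume these integers are nonnegative, but nothing forces $c_{ij}\ge 0$. In general $P$ is only a \emph{Laurent} polynomial, and Sert's theorem does not apply to Laurent polynomials. The paper explicitly multiplies by a common monomial $C=\prod_j x_j^{\hat c_j}$ (with $\hat c_j$ chosen to clear all negative exponents) so that $P=C\sum_i\beta_iP_i$ is a genuine polynomial, and then writes $\lambda=\chi'\chi''$ with $\chi'=C^{-1}(\dots)$ and $\chi''=P(\dots)$. The lower bound for $\lambda$ then splits into a direct elementary lower bound for $\chi'$ (which is where the $-\e^{8\delta\zeta}$ term arises) and the Sert bound for $\chi''$ (which produces the $-rm^\delta\e^{6\delta^2\zeta}(\dots)$ term). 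You attribute $\e^{8\delta\zeta}$ to ``the leading constant in Sert's bound,'' which is incorrect; it is the price of removing negative exponents, and without addressing the Laurent issue your proof as written cannot produce this part of the statement.

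Finally, a small correction of bookkeeping: in the second term, $m^\delta\e^{6\delta^2\zeta}$ is the bound on $d_P^M$ (with $d_P\le m\e^{6\delta\zeta}$ and $M\le\delta$), while $r$ is the constant $41\cdot 2^{-M+1}3^{2M}M^MD^{M+1}$ from Theorem~\ref{thm:EffLW} specialised to $M\le\delta$, $D\le\delta^2$. So the factor you describe as ``the product of the degree-power $d(P)^r$ with the constant $c$'' should be $d_P^M$ times Sert's leading constant, with $M$ the number of variables, not your $r$ (the basis size).
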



\begin{theorem}[Main Result B]\label{thm:mainb}
For any positive integers $m$ and $d$ and positive real number $h$, there exists a nonzero linear form $\lambda=\beta_1\e^{\alpha_1}+\cdots+\beta_m\e^{\alpha_m}$
where $\alpha_i$ and $\beta_i$ are algebraic numbers with the maximal degree $d$ and the maximal Weil height $h$, such that
\[
\ln \,|\lambda| 
\le
-\frac{1}{2}\ell \ln \frac{n_1n_2}{\ell}+\ln{m} + \ln {\sqrt{2}} + 2
\]
when $n_1\ge\ell$ and $n_2\ge 1$, in which
\begin{itemize}
    \item[$\cdot$] $\ell=\lfloor \tfrac{m}{2} \rfloor$,
    \item[$\cdot$] $n_1=\frac{1}{2} \Big( \tfrac{1}{6} \e^h \Big)^{d^2+d}$, and\; $n_2=\frac{1}{2} \Big(\tfrac{1}{6} \e^{\tfrac{1}{2}h-\tfrac{1}{2}\ln 2} \Big)^{d-\sqrt{d}}$.
\end{itemize}
\end{theorem}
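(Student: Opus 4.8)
The plan is to realize $\lambda$ as the difference of two distinct members of a large finite family of linear forms, and to force that difference to be small by a two–dimensional pigeonhole argument in the complex plane. First I would fix the two finite sets over which the construction ranges. Let $\mathcal A$ be a set of pairwise distinct algebraic numbers of degree $\le d$ and Weil height $\le h$ that in addition satisfy $\Re(a)\le 0$, so that $|\e^{a}|\le 1$ for every $a\in\mathcal A$; and let $\mathcal B$ be a set of nonzero algebraic numbers of degree $\le\lfloor\sqrt d\rfloor$ and Weil height $\le\tfrac12 h-\tfrac12\ln 2$ lying in the closed unit disc. These choices are made so that, on the one hand, every element of $\mathcal B$ and every difference $b-b'$ with $b,b'\in\mathcal B$ is again an algebraic number of degree $\le\lfloor\sqrt d\rfloor^{2}\le d$ and Weil height $\le 2\big(\tfrac12 h-\tfrac12\ln 2\big)+\ln 2=h$ (using $\h(x-y)\le\h(x)+\h(y)+\ln 2$), so such differences remain admissible coefficients; and, on the other hand, the restrictions $\Re(a)\le 0$ and $|b|\le 1$ keep every linear form built from $\mathcal A$ and $\mathcal B$ confined to a bounded region of $\C$. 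The numbers $n_1$ and $n_2$ in the statement are precisely explicit lower bounds for $|\mathcal A|$ and $|\mathcal B|$: one counts integer polynomials with bounded coefficients, using the crude inequalities relating the Weil height to the Mahler measure and to the naive height of a minimal polynomial, and notes that the geometric restrictions discard only a bounded proportion of the count; the exponents $d^{2}+d$ and $d-\sqrt d$ reflect the number of free coefficients of a minimal polynomial of degree $d$ (resp.\ of degree $\approx\sqrt d$), and the shift $\tfrac12 h-\tfrac12\ln 2$ is forced by the difference bound above. This is exactly where the ``counting algebraic numbers'' ingredient is invoked.

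Next I would consider the family $\mathcal F$ of linear forms $\Lambda=\sum_{i=1}^{\ell}b_i\e^{a_i}$, with $\ell=\lfloor m/2\rfloor$, where $\{a_1,\dots,a_\ell\}$ ranges over the $\ell$–element subsets of $\mathcal A$ and each $b_i$ over $\mathcal B$. Using $n_1\ge\ell$ and $n_2\ge 1$,
\[
|\mathcal F|\ \ge\ \binom{|\mathcal A|}{\ell}\,|\mathcal B|^{\ell}\ \ge\ \binom{n_1}{\ell}\,n_2^{\ell}\ \ge\ \Big(\tfrac{n_1}{\ell}\Big)^{\ell} n_2^{\ell}\ =\ \Big(\tfrac{n_1 n_2}{\ell}\Big)^{\ell}.
\]
By Theorem~\ref{thm:L2} distinct members of $\mathcal F$ take distinct complex values: collecting equal exponents in the difference of two members and discarding zero coefficients yields a linear form that is not identically zero (otherwise the two members coincide), with distinct algebraic exponents and nonzero algebraic coefficients, hence of nonzero value by Theorem~\ref{thm:L2}. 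Moreover $|\Lambda|\le\sum_{i}|b_i|\,|\e^{a_i}|\le\ell$, so all $|\mathcal F|$ distinct values lie in the square $[-\ell,\ell]^{2}$. I would then partition this square into $t\times t$ congruent sub–squares with $t=\lceil\sqrt{|\mathcal F|}\,\rceil-1$, so that $t^{2}<|\mathcal F|$; the pigeonhole principle yields distinct $\Lambda\ne\Lambda'$ in $\mathcal F$ with values in one common sub–square, whence $|\Lambda-\Lambda'|\le\sqrt 2\cdot\tfrac{2\ell}{t}$. Setting $\lambda:=\Lambda-\Lambda'$ and collecting equal exponents, $\lambda$ is a nonzero linear form in at most $2\ell\le m$ exponentials whose exponents lie in $\mathcal A$ and whose coefficients lie in $\mathcal B\cup(\mathcal B-\mathcal B)$; by the first paragraph all of these are algebraic numbers of degree $\le d$ and Weil height $\le h$, so $\lambda$ is of the required form.

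Finally, $t\ge\sqrt{|\mathcal F|}-1\ge\tfrac12\sqrt{|\mathcal F|}$ as soon as $|\mathcal F|\ge 4$ (and the small cases $|\mathcal F|\le 3$, where the main term is negligible, follow directly from $|\lambda|\le 2\sqrt 2\,\ell$), so, using $|\mathcal F|\ge(n_1 n_2/\ell)^{\ell}$, $\ln 4\le 2$ and $\ell\le m$,
\[
\ln|\lambda|\ \le\ \ln\!\big(2\sqrt 2\,\ell\big)-\ln t\ \le\ \ln\!\big(4\sqrt 2\,\ell\big)-\tfrac12\ln|\mathcal F|\ \le\ -\tfrac12\ell\ln\tfrac{n_1 n_2}{\ell}+\ln m+\ln\sqrt 2+2.
\]
The routine parts are this pigeonhole estimate and the bookkeeping of degrees and heights under subtraction; the step I expect to be the main obstacle is the counting input — producing \emph{explicit} lower bounds $n_1,n_2$ for the number of algebraic numbers of bounded degree and height that in addition lie in a prescribed region (needed so that $\ln|\lambda|$ does not acquire a term of size $\e^{dh}$ coming from an unbounded $|\e^{a}|$) and, for $\mathcal B$, are closed under subtraction within degree $d$ and height $h$ (needed so that the coefficients of $\lambda$ stay admissible).
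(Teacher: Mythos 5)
Your proposal is correct and follows essentially the same route as the paper: build a large finite family of linear forms with size-restricted algebraic exponents and coefficients so that all values lie in a fixed square of the complex plane, apply the two-dimensional pigeonhole principle on a $t\times t$ grid to find two forms at distance $\le \sqrt2\,(\text{side})/t$, take their difference, and bound the family from below via an explicit count of algebraic numbers of bounded degree and height. The divergences are cosmetic: you confine the exponents to $\Re(a)\le 0$ (negation symmetry halves the count) where the paper confines both $\alpha_i$ and $\beta_i$ to $|\cdot|\le 1$ and uses $\alpha\mapsto\alpha^{-1}$ for the same factor $\tfrac12$, giving a square of side $\e m$ rather than your $2\ell$; you take exactly $\ell$-term forms where the paper allows up to $\ell$; and the step you flag as the main obstacle, the explicit lower bound for the number of algebraic numbers of degree $d$ and height $\le H$, is precisely Schmidt's estimate $|\A_d(H)|>6^{-d(d+1)}H^{d(d+1)}$ (for $H^d\ge 2$), which the paper cites and which, combined with the halving trick, yields the stated $n_1,n_2$. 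One small point in your favour: you make explicit the appeal to Lindemann--Weierstrass to guarantee that the two pigeonholed forms have distinct values, hence that $\lambda$ is nonzero, a step the paper leaves implicit.
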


This work is mainly to provide an explicit version of the
Lindemann\linebreak[0]--Weierstrass theorem, which is important to transcendental and computational number theory and closely related to theoretical computer science.
Note that the Lindemann--Weierstrass theorem plays an important role
in the study of decision problems for
the first-order logic or the continuous systems involving exponential functions.
It is utilised to prove the existence of the common root of two transcendental equations~\cite{anai2000deciding,bell2010continuous, mccallum2012deciding,chen2015continuous,chonev2016skolem,Xu2021QCTMC}
and the decidability of the sign of a transcendental numbers in such linear forms 
~\cite{AzizSSB96,feng2017precisely, xu2016multiphase, majumdar2021computability}.
It is also closely related to some fundamental problems in algebraic computation such as deciding the sign of polynomials at algebraic numbers. Even giving the lower bound for the absolute value of a polynomial at non-root rational numbers of bounded heights is challenging~\cite{garcia2018polynomial}.
Additionally, the Lindemann--Weierstrass theorem is generalised by Schanuel's conjecture~\cite{ax1971schanuel}, which is also widely applied in the field mentioned above~\cite{strzebonski2009real,huang2018positive,almagor2018minimal,majumdar2020decidability,chistikov2020big}.
Therefore, we believe the research and the results presented in this paper will potentially inspire or promote the study of these problems, especially from the perspective of computational complexity.


\section{Preliminary}
In this section, we briefly review some algebraic notions, definitions, and properties,
especially for some measurements of the size of polynomials and algebraic numbers.
Meanwhile, we also deduce some basic results,
which will be used in the analysis afterwards.



\subsection{Polynomials and Algebraic Numbers}
Let $f\in\C[x]$ be a polynomial
\begin{equation}\label{eq:polyf}
f(x)=a_d x^d + a_{d-1}x^{d-1}+\cdots+a_1x+a_0.
\end{equation}
where $a_d\neq 0$ is called the leading coefficient of $f$ and $d=\deg(f)>0$ is the degree of $f$.
The size of $f$ can be measured
by the $p$-norm of the coefficient vector
$(a_d,\ldots,a_0)$.
Specifically, 
the $\infty$-norm is called
the \emph{height} of the polynomial:
\[
\H(f)=\max\{|a_0|,\linebreak[0]\ldots,|a_d|\},
\]
the $1$-norm is called the \emph{length} of the polynomial:
\[L(f)=|a_0|+\cdots+|a_d|.
\]
The $2$-norm denoted by $\|f\|_2$ is the ordinary Euclidean length of the coefficient vector.
For a multivariate polynomial $f\in\C[x_1,\ldots,x_m]$,
the $p$-norm can be also defined in a similar way
according to the coefficient vector identified with $f$. 



For a commutative ring $\mathcal{R}$, the semi-norm~\cite{collins1974minimum} 
can be defined by a function $\v:\mathcal{R}\to \R_{\ge 0}$.
By further defining $\v(\sum_{i=0}^d a_i x^i)=\sum_{i=0}^d \v(a_i)$,
it can be extended to 
the semi-norm for the polynomial ring $\mathcal{R}[x]$
and generalised to the multivariate polynomials in
$\mathcal{R}[x_1,\ldots,x_m]$ via the induction on $m$. 
Then, the semi-norm of the resultant of
two polynomials can be bounded.
\begin{theorem}[Theorem 2 of \cite{collins1974minimum}]
Let $f$ and $g$ be polynomials over a commutative ring $\mathcal{R}$ with semi-norm $\v$. Then
\[
\v(\Res(f,g))  \le \v(f)^{\deg (g)} \v(g)^{\deg(f)}.
\]
\end{theorem}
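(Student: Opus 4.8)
The plan is to work directly from the Sylvester-matrix formula for the resultant. Write $n=\deg(f)$ and $m=\deg(g)$, and recall that $\Res(f,g)=\det S$, where $S$ is the $(n+m)\times(n+m)$ Sylvester matrix whose first $m$ rows are successive shifts of the coefficient vector of $f$ (padded with zeros) and whose last $n$ rows are successive shifts of the coefficient vector of $g$. All entries of $S$ lie in the ring $\mathcal{R}$ carrying the semi-norm $\v$; for multivariate $f,g$ one simply regards $\v$ as the semi-norm on the ring of coefficients with respect to the eliminated variable, and the argument is insensitive to which level of the tower one works at.

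Next I would push $\v$ through the Leibniz expansion $\det S=\sum_{\sigma}\operatorname{sgn}(\sigma)\prod_{i}S_{i,\sigma(i)}$. Using the semi-norm axioms --- $\v(0)=0$, $\v(-a)=\v(a)$, subadditivity $\v(a+b)\le\v(a)+\v(b)$ (extended to finite sums), and submultiplicativity $\v(ab)\le\v(a)\v(b)$ (extended to finite products) --- one obtains
\[
\v(\Res(f,g))\ \le\ \sum_{\sigma}\prod_{i}\v\!\left(S_{i,\sigma(i)}\right),
\]
that is, $\v(\Res(f,g))$ is bounded by the permanent of the nonnegative real matrix $\bigl(\v(S_{i,j})\bigr)_{i,j}$.

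Then I would invoke the elementary permanent bound: for any nonnegative $N\times N$ matrix $A=(A_{i,j})$,
\[
\operatorname{per}(A)=\sum_{\sigma}\prod_{i}A_{i,\sigma(i)}\ \le\ \sum_{\phi\colon[N]\to[N]}\prod_{i}A_{i,\phi(i)}\ =\ \prod_{i=1}^{N}\Bigl(\sum_{j=1}^{N}A_{i,j}\Bigr),
\]
since dropping the injectivity constraint on $\sigma$ only adds nonnegative terms and lets the sum factor columnwise. Applying this to $A=\bigl(\v(S_{i,j})\bigr)$ and using $\v(0)=0$ to discard the zero padding, each of the first $m$ rows of $S$ has row sum $\sum_{i}\v(a_i)=\v(f)$ and each of the last $n$ rows has row sum $\v(g)$; multiplying the $n+m$ row sums gives exactly $\v(f)^{m}\,\v(g)^{n}=\v(f)^{\deg(g)}\,\v(g)^{\deg(f)}$, which is the claim.

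The only care needed is bookkeeping: confirming that $\v(0)=0$ sits among the semi-norm axioms (so the zero entries of the Sylvester rows are harmless), that the extension $\v\bigl(\sum_i a_i x^i\bigr)=\sum_i\v(a_i)$ matches the row sums of $S$, and that the row count splits as $m$ copies of $f$ against $n$ copies of $g$. There is no analytic content, so I do not expect any step to present a genuine obstacle.
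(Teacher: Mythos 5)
Your argument is correct, and it is essentially the same Sylvester-matrix argument that Collins and Horowitz give for their Theorem~2; note that the paper at hand simply cites the result without reproving it, so the relevant comparison is to the original source. The chain Leibniz expansion $\Rightarrow$ subadditivity/submultiplicativity of $\v$ $\Rightarrow$ permanent of $(\v(S_{i,j}))$ $\Rightarrow$ row-sum product is exactly right, and the bookkeeping checks out: $\deg(g)$ rows built from $f$'s coefficients each contribute a factor $\v(f)$, and $\deg(f)$ rows built from $g$'s coefficients each contribute $\v(g)$, giving $\v(f)^{\deg(g)}\v(g)^{\deg(f)}$. One nit: your phrase ``factor columnwise'' should read ``row-wise'' (the product $\prod_i\sum_j A_{i,j}$ is over rows), but the inequality itself is correct as stated.
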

Note that the length $L$ is a semi-norm function.
Thus, by considering $f,g\in\Z[x,y]$ as univariate polynomials in $y$ over the commutative ring $\Z[x]$, we have the following corollary.
\begin{corollary}\label{coro:semi}
Let $f$ and $g$ be polynomials in $\Z[x,y]$. Then
\[
\L(\Res_y(f,g)) \le \L(f)^{\deg_y(g)} \L(g)^{\deg_y(f)}.
\]
\end{corollary}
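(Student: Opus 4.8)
The plan is to obtain the corollary as a direct specialisation of Theorem~2 of~\cite{collins1974minimum}, with a judicious choice of ground ring and semi-norm. First I would observe that the length $\L$, restricted to $\Z$, is simply the absolute value $|\cdot|$, which is a semi-norm in the sense of~\cite{collins1974minimum}: the required properties $\v(a+b)\le\v(a)+\v(b)$ and $\v(ab)\le\v(a)\v(b)$ are exactly the triangle inequality and the multiplicativity of $|\cdot|$ on $\Z$.

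Next I would check that the canonical extension of this semi-norm to a polynomial ring --- given in the excerpt by $\v(\sum_i a_i x^i)=\sum_i \v(a_i)$, and extended to several variables by induction --- coincides with $\L$. Writing $f\in\Z[x,y]$ as $f=\sum_j c_j(x)\,y^j$ with $c_j\in\Z[x]$, the inductive definition gives $\v(f)=\sum_j \v(c_j)=\sum_j \L(c_j)$; since each $\L(c_j)$ is the sum of the absolute values of the coefficients of $c_j$, summing over $j$ recovers the $1$-norm of the full coefficient vector of $f$, so $\v(f)=\L(f)$. The same identity holds for $g$, and --- the key point --- for $\Res_y(f,g)$, which lies in $\Z[x]$, whence $\v(\Res_y(f,g))=\L(\Res_y(f,g))$.

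Finally I would regard $f$ and $g$ as univariate polynomials in $y$ over the commutative ring $\mathcal{R}=\Z[x]$ equipped with the semi-norm $\v=\L$ just described, and invoke Theorem~2 of~\cite{collins1974minimum}, in which $\deg$ is read as $\deg_y$. This yields $\v(\Res_y(f,g))\le \v(f)^{\deg_y(g)}\,\v(g)^{\deg_y(f)}$, which under the identifications above is precisely the asserted inequality $\L(\Res_y(f,g))\le \L(f)^{\deg_y(g)}\,\L(g)^{\deg_y(f)}$.

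I do not expect any genuine obstacle here. The only points needing care are the elementary bookkeeping that the inductively defined semi-norm on $(\Z[x])[y]$ agrees with the $1$-norm on $\Z[x,y]$, and the degenerate cases (e.g.\ $\deg_y f = 0$, where $f\in\Z[x]$ and $\Res_y(f,g)=f^{\deg_y g}$), in which the bound reduces to submultiplicativity of $\L$ under products and holds at once; likewise $\Res_y(f,g)=0$ is harmless since $\L(0)=0$.
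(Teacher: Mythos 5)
Your proof is correct and follows exactly the paper's route: take $\mathcal{R}=\Z[x]$ with semi-norm $\L$, view $f,g$ as elements of $\mathcal{R}[y]$, and specialise Theorem~2 of Collins--Horowitz. The paper states this in one line without spelling out the bookkeeping that the inductively extended semi-norm on $(\Z[x])[y]$ equals the $1$-norm on $\Z[x,y]$, which you verify explicitly; there is no substantive difference.
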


Let $\A$ denote the field of algebraic numbers.
A complex number $\alpha$ is algebraic if it is a root of a polynomial in $\Z[x]$.
The defining polynomial of $\alpha$ is the unique polynomial $f$ of least degree, which vanishes at $\alpha$ and has co-prime integer coefficients. 
Here, $\alpha$ is said to be of the same degree and height of $f$. 
If $f$ is monic, viz. the leading coefficient is $1$, then $\alpha$ is called an algebraic integer.


\subsection{Absolute Logarithmic Height}
Let $\K$ be a number field. 
For $\alpha\in\K$, define
$
H_{\K}(\alpha)= \prod_{v\in M_{\K}}  \max \linebreak[0] \{ 1,|\alpha|_v \}^{d_v}$,
where $M_\K$ is the set of normalised absolute value of $\K$ 
and $d_v$ is the degree of the completion of $\K$ at $v$ over $\R$~\cite{waldschmidt2013diophantine}. When $\alpha$ is an algebraic number, the \emph{absolute logarithmic height} (or \emph{Weil height}) $\h:\A \to [0,\infty)$ is defined by
\begin{equation}\label{eq:weilh}
\h(\alpha)=\frac{1}{[\K:\Q]}\ln H_\K(\alpha),
\end{equation}
and $\e^{\h(\alpha)}$ is called the absolute multiplicative height.
Note that the Weil height does not depend on the choice of the number field $\K$ containing $\alpha$.
Notably, for a nonzero rational number $\alpha=a/b$ in lowest terms, $\h(\alpha)=\ln \max\{|a|,|b|\}$.
\begin{property}[Property 3.3 of \cite{waldschmidt2013diophantine}]\label{prop:h}
Let $\alpha_1$ and $\alpha_2$ be two algebraic numbers, then
\[
\begin{aligned}
&\h(\alpha_1\alpha_2)\le \h(\alpha_1) + \h(\alpha_2),\\
&\h(\alpha_1+\alpha_2)\le \ln 2 + \h(\alpha_1) + \h(\alpha_2).
\end{aligned}
\]
For any algebraic number $\alpha\neq 0$ and any integer $n$,
$
\h(\alpha^n) = |n|\h(\alpha).
$
\end{property}

\begin{lemma}[Lemma 3.11 of \cite{waldschmidt2013diophantine}]\label{lem:hH}
For an algebraic number $\alpha$ of degree $d$, we have
\[
\frac{1}{d}\ln \H(\alpha)-\ln 2
\le \h(\alpha)
\le \frac{1}{d}\ln \H(\alpha) + \frac{1}{2d}\ln (d+1).
\]
\end{lemma}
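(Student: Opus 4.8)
The plan is to route both inequalities through the \emph{Mahler measure} $\M(f)=|a_d|\prod_{i=1}^{d}\max\{1,|\alpha_i|\}$ of the defining polynomial $f(x)=a_d\prod_{i=1}^{d}(x-\alpha_i)\in\Z[x]$ of $\alpha$ (with $\alpha=\alpha_1$ and $\alpha_2,\dots,\alpha_d$ its conjugates), using the classical identity $\h(\alpha)=\tfrac1d\ln\M(f)$. That identity is obtained by evaluating $H_\K(\alpha)$ for $\K=\Q(\alpha)$ through the product formula: the non-archimedean places contribute the factor $|a_d|$ (a content / Gauss-lemma computation, which collapses to $1$ precisely when $\alpha$ is an algebraic integer), while the archimedean places contribute $\prod_{i=1}^{d}\max\{1,|\alpha_i|\}$. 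Granting this, since $\H(\alpha)=\H(f)=\max_{0\le k\le d}|a_{d-k}|$, the lemma reduces to the two sandwiching bounds $\M(f)\le\sqrt{d+1}\,\H(f)$ and $\H(f)\le 2^{d}\M(f)$.

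For the upper bound on $\h(\alpha)$ I would establish $\M(f)\le\|f\|_2$, i.e.\ Landau's inequality. The slick route is the identity $\|(x-a)p\|_2=\|(\bar a x-1)p\|_2$, valid for every $a\in\C$ and every $p\in\C[x]$: expanding both squared norms, the mixed terms coincide and the square terms are a reindexing of $\|p\|_2^2$, so both sides equal $(1+|a|^2)\|p\|_2^2-2\Re\big(a\sum_i p_i\overline{p_{i-1}}\big)$. Applying this to replace, inside $f$, every linear factor $(x-\alpha_i)$ with $|\alpha_i|>1$ by $(\bar\alpha_i x-1)$ produces a polynomial $\tilde f$ with $\|\tilde f\|_2=\|f\|_2$ whose leading coefficient has modulus $|a_d|\prod_{|\alpha_i|>1}|\alpha_i|=\M(f)$; hence $\|f\|_2=\|\tilde f\|_2\ge\M(f)$. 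Since $f$ has $d+1$ coefficients, $\|f\|_2\le\sqrt{d+1}\,\H(f)$, and therefore $\h(\alpha)=\tfrac1d\ln\M(f)\le\tfrac1d\ln\H(f)+\tfrac1{2d}\ln(d+1)$.

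For the lower bound on $\h(\alpha)$ I would establish $\H(f)\le 2^{d}\M(f)$. Writing $a_{d-k}=(-1)^{k}a_d\,e_k(\alpha_1,\dots,\alpha_d)$ with $e_k$ the $k$-th elementary symmetric polynomial, a sum of $\binom{d}{k}$ monomials each of modulus at most $\prod_{i=1}^{d}\max\{1,|\alpha_i|\}$, we get $|a_{d-k}|\le|a_d|\binom{d}{k}\prod_{i=1}^{d}\max\{1,|\alpha_i|\}=\binom{d}{k}\M(f)\le 2^{d}\M(f)$. Taking the maximum over $k$ gives $\H(f)\le 2^{d}\M(f)$, hence $\tfrac1d\ln\H(f)-\ln 2\le\tfrac1d\ln\M(f)=\h(\alpha)$.

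The main obstacle is Landau's inequality $\M(f)\le\|f\|_2$; if one prefers to avoid the factor-swapping argument, it also follows from Jensen's formula $\ln\M(f)=\int_0^1\ln|f(\e^{2\pi\i t})|\,\mathrm{d}t$ combined with Cauchy--Schwarz (equivalently Parseval) applied to $\int_0^1|f(\e^{2\pi\i t})|^2\,\mathrm{d}t=\|f\|_2^2$. A secondary point needing care throughout is the bookkeeping of the leading coefficient $a_d$ in the Mahler-measure identity, so that all three displayed inequalities stay valid for arbitrary algebraic $\alpha$ and not merely for algebraic integers.
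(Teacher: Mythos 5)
Your proof is correct and follows the same route the paper implicitly relies on: the paper states this as a cited result (Lemma~3.11 of Waldschmidt) without proof, but its neighbouring Lemma~\ref{lem:hmah} ($\h(\alpha)=\tfrac1d\ln\M(\alpha)$) and the justification of Lemma~\ref{lem:hl} via $2^{-d}\L(f)\le\M(f)\le\L(f)$ show that the intended argument is exactly the Mahler-measure sandwich $\H(f)\le 2^{d}\M(f)\le 2^{d}\|f\|_2\le 2^{d}\sqrt{d+1}\,\H(f)$ that you give. Both your Landau-inequality step and the elementary-symmetric-function bound are the standard and correct ingredients, so nothing further is needed.
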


For any vector $\alpha = (\alpha_1,\ldots,\alpha_m)\in\K^{n}$, define
$
H_\K(\alpha) = \prod_{v\in M_\K} \linebreak[0]\max \{1, \max_{i=1}^m|\alpha_i|_v\}^{d_v}$,
and its Weil height can be further defined as Equation~\eqref{eq:weilh} 
when $\alpha_1,\ldots,\alpha_m$ are algebraic.
By this definition,
\[
H_\K(\alpha) \le \prod_{i=1}^m \prod_{v\in M_\K} \max\{1, |\alpha_i|_v\}^{d_v} = \prod_{i=1}^m H_\K (\alpha_i).
\]
Then, we have the following lemma:
\begin{lemma} \label{lem:hw_vec}
Let $\alpha =(\alpha_1,\ldots,\alpha_m)\in\A^{m}$. Then
\[
\h(\alpha)\le \sum_{i=1}^m \h (\alpha_i).
\]
\end{lemma}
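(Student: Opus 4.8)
The plan is to reduce the statement to the multiplicative-height inequality already displayed immediately above the lemma, and then pass to logarithms. First I would fix a \emph{single} number field $\K$ containing all the coordinates $\alpha_1,\ldots,\alpha_m$ — for instance $\K=\Q(\alpha_1,\ldots,\alpha_m)$ — so that $H_\K(\alpha)$ and each $H_\K(\alpha_i)$ are all computed over the same field and share the common normalising factor $[\K:\Q]$. This is legitimate precisely because, as recalled right after Equation~\eqref{eq:weilh}, the Weil height does not depend on the choice of number field containing the algebraic numbers in question; so the quantity $\frac{1}{[\K:\Q]}\ln H_\K(\alpha_i)$ coincides with $\h(\alpha_i)$ even though $\alpha_i$ may live in a proper subfield of $\K$.

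Next I would invoke the inequality $H_\K(\alpha)\le\prod_{i=1}^m H_\K(\alpha_i)$ stated in the excerpt, which itself comes from the elementary place-by-place bound $\max\{1,\max_i|\alpha_i|_v\}\le\prod_i\max\{1,|\alpha_i|_v\}$, raised to the power $d_v$ and multiplied over $v\in M_\K$. Taking logarithms of both sides converts the product into a sum, giving $\ln H_\K(\alpha)\le\sum_{i=1}^m\ln H_\K(\alpha_i)$, using only monotonicity of $\ln$ and that it sends products to sums.

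Finally I would divide through by $[\K:\Q]>0$. The left-hand side becomes $\h(\alpha)$ by the definition of the Weil height of a vector, and each summand $\frac{1}{[\K:\Q]}\ln H_\K(\alpha_i)$ on the right becomes $\h(\alpha_i)$ by the definition of the Weil height of an algebraic number (again via field-independence). This yields $\h(\alpha)\le\sum_{i=1}^m\h(\alpha_i)$, which is the claim.

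There is essentially no genuine obstacle in this argument: the only point deserving care is the bookkeeping of which number field the various heights are evaluated over, and that is fully handled by the independence-of-field remark. Everything else is the monotonicity of the logarithm together with the already-established submultiplicativity $H_\K(\alpha)\le\prod_i H_\K(\alpha_i)$.
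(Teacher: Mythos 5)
Your argument is exactly the one the paper has in mind: it displays the product inequality $H_\K(\alpha)\le\prod_{i=1}^m H_\K(\alpha_i)$ immediately before the lemma, and the lemma then follows by taking logarithms and dividing by $[\K:\Q]$, with field-independence of the Weil height justifying the choice of a common $\K$. Your proposal just makes these implicit steps explicit; there is no substantive difference.
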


Let $f\in\C[x]$ be a nonzero polynomial of
degree $d$ in the form
$f(x)=a_d\prod_{i=1}^d(x-\alpha_i)$.
The Mahler's measure of $f$ is defined by
\[
\M(f) = |a_d| \prod_{i=1}^d \max \{1,|\alpha_i|\}.
\]
For each algebraic number $\alpha$ with defining polynomial $f\in\Z[x]$, its Mahler's measure is define
by $\M(\alpha)=\M(f)$.
\begin{lemma}[Lemma 3.10 of \cite{waldschmidt2013diophantine}]\label{lem:hmah}
Let $\alpha$ be an algebraic number of degree $d$. Then
$\h(\alpha) = \frac{1}{d}\ln \M(\alpha)$.
\end{lemma}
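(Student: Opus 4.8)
The plan is to compute $H_\K(\alpha)$ directly from its definition with the specific choice $\K=\Q(\alpha)$, so that $[\K:\Q]=d$, and to show that $H_\K(\alpha)$ is exactly the Mahler measure $\M(\alpha)$; the claimed identity is then immediate from \eqref{eq:weilh} together with the fact, recalled in the text, that the Weil height does not depend on the number field chosen to contain $\alpha$. Write the primitive defining polynomial of $\alpha$ as $f(x)=a_d\prod_{i=1}^{d}(x-\alpha_i)\in\Z[x]$ with $\alpha=\alpha_1$, and assume $a_d>0$ (replace $f$ by $-f$ otherwise). First I would split the product defining $H_\K(\alpha)$ into its archimedean and non-archimedean parts,
\[
H_\K(\alpha)=\Bigl(\prod_{v\mid\infty}\max\{1,|\alpha|_v\}^{d_v}\Bigr)\cdot\Bigl(\prod_{v\text{ finite}}\max\{1,|\alpha|_v\}^{d_v}\Bigr),
\]
and treat the two factors separately.

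For the archimedean factor, the archimedean places of $\K$ are in bijection with the $d$ field embeddings $\sigma\colon\K\hookrightarrow\C$ taken up to complex conjugation: for the place $v$ attached to $\sigma$ one has $|\alpha|_v=|\sigma(\alpha)|$, while $d_v=1$ for a real embedding and $d_v=2$ for a complex conjugate pair, so that a complex place contributes $\max\{1,|\sigma(\alpha)|\}^2=\max\{1,|\sigma(\alpha)|\}\max\{1,|\overline{\sigma(\alpha)}|\}$. Since the numbers $\sigma(\alpha)$ run through the full list of conjugates $\alpha_1,\dots,\alpha_d$, the archimedean factor equals $\prod_{i=1}^{d}\max\{1,|\alpha_i|\}$. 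For the non-archimedean factor I would show that it equals $a_d$. Fix a rational prime $p$; the places $v\mid p$ of $\K$ correspond to the monic irreducible factors of $f$ over $\Q_p$, and all roots of a given $\Q_p$-irreducible factor are conjugate over $\Q_p$, hence share the same $p$-adic absolute value, so grouping the roots of $f$ over a fixed algebraic closure of $\Q_p$ accordingly gives
\[
\prod_{v\mid p}\max\{1,|\alpha|_v\}^{d_v}=\prod_{\gamma}\max\{1,|\gamma|_p\}=p^{S},\qquad S=\sum_{\gamma}\max\{0,-\mathrm{ord}_p(\gamma)\}.
\]
The Newton polygon of $f$ over $\Z_p$ now shows $S=\mathrm{ord}_p(a_d)$: every vertex $(i,\mathrm{ord}_p(a_i))$ has nonnegative height because $f\in\Z[x]$, while primitivity of $f$ forces $\min_i\mathrm{ord}_p(a_i)=0$, so the polygon first descends from $(0,\mathrm{ord}_p(a_0))$ to height $0$ and then rises to $(d,\mathrm{ord}_p(a_d))$, and the total rise of this ascending part, namely $\mathrm{ord}_p(a_d)$, equals the sum of $-\mathrm{ord}_p(\gamma)$ over the roots with $\mathrm{ord}_p(\gamma)<0$, which is $S$. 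Hence $\prod_{v\mid p}\max\{1,|\alpha|_v\}^{d_v}=p^{\mathrm{ord}_p(a_d)}=|a_d|_p^{-1}$, and multiplying over all primes $p$ and invoking the product formula $a_d\prod_p|a_d|_p=1$ on $\Q$ shows the non-archimedean factor equals $a_d$.

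Combining the two factors gives $H_\K(\alpha)=a_d\prod_{i=1}^{d}\max\{1,|\alpha_i|\}=\M(f)=\M(\alpha)$ by the definition of Mahler's measure, and dividing by $[\K:\Q]=d$ and taking logarithms in \eqref{eq:weilh} yields $\h(\alpha)=\tfrac{1}{d}\ln\M(\alpha)$. I expect the genuine difficulty to lie entirely in the finite-place identity $\prod_{v\mid p}\max\{1,|\alpha|_v\}^{d_v}=|a_d|_p^{-1}$: one must match the $p$-adic valuation data of $\alpha$ at the places above $p$ with the $p$-part of the leading coefficient, and this is precisely where the hypothesis that $f$ is primitive (not merely integral) is indispensable, since otherwise the content of $f$ would contribute an extra factor and the equality with $\M(\alpha)$ would fail. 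The remaining ingredients --- the correspondence between archimedean places and embeddings and the concluding bookkeeping --- are routine once the normalisations of $|\cdot|_v$ and $d_v$ are pinned down.
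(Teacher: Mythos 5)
The paper does not prove this lemma itself; it cites it directly as Lemma~3.10 of Waldschmidt's book. Your reconstruction is correct and follows the standard proof underlying that reference: split $H_\K(\alpha)$ into archimedean and non-archimedean parts, identify the former with $\prod_i\max\{1,|\alpha_i|\}$ via the embeddings, and identify the latter with $|a_d|$ through the Newton polygon plus the product formula, with primitivity of the defining polynomial being exactly what rules out an extra content factor.
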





\begin{corollary}\label{coro:broot}
Let $f\in\Z[x]$ be a polynomial of degree $d$ and Weil height $h$.
Then each root $\alpha$ of $f$ satisfies 
$
|\alpha| \le \e^{dh}$.
\end{corollary}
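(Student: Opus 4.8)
The plan is to read the bound off directly from the Mahler measure of $f$. First I would write $f$ in factored form $f(x)=a_d\prod_{i=1}^{d}(x-\alpha_i)$, where $a_d\in\Z\setminus\{0\}$ is the leading coefficient and $\alpha_1,\ldots,\alpha_d$ are the complex roots of $f$ listed with multiplicity, so that by definition $\M(f)=|a_d|\prod_{i=1}^{d}\max\{1,|\alpha_i|\}$.

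The first step is to observe that every factor in this product is at least $1$: since $a_d$ is a nonzero integer we have $|a_d|\ge 1$, and trivially $\max\{1,|\alpha_i|\}\ge 1$ for each $i$. Hence, fixing an arbitrary root $\alpha=\alpha_j$ and bounding all the remaining factors from below by $1$, we get
\[
|\alpha|\le\max\{1,|\alpha|\}\le |a_d|\prod_{i=1}^{d}\max\{1,|\alpha_i|\}=\M(f).
\]

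The second step is to rewrite $\M(f)$ in exponential form. Using the normalisation that the Weil height of $f$ is $h=\tfrac{1}{d}\ln\M(f)$ --- which for the defining polynomial of an algebraic number is exactly the content of Lemma~\ref{lem:hmah} --- we obtain $\M(f)=\e^{dh}$. Combining this with the previous display yields $|\alpha|\le\e^{dh}$, as claimed.

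I do not expect any serious obstacle: the argument is a two-line consequence of the definition of the Mahler measure, the only genuine input being that integrality of the coefficients forces $|a_d|\ge 1$, which is what lets one discard the leading coefficient and all the ``other'' root factors. The one point to keep an eye on is matching the height normalisation, i.e.\ that ``Weil height $h$'' is read as $\tfrac{1}{d}\ln\M(f)$ (equivalently $\h(\alpha)$ when $f$ is the defining polynomial of $\alpha$), so that Lemma~\ref{lem:hmah} applies verbatim.
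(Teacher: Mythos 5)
Your proof is correct and takes essentially the same approach as the paper's: both rewrite $h$ as $\tfrac{1}{d}\ln\M(f)$ via Lemma~\ref{lem:hmah} and then use the integrality of $a_d$ (so $|a_d|\ge 1$) together with $\max\{1,|\alpha_i|\}\ge 1$ to discard all but the chosen root's factor, giving $|\alpha|\le\M(f)=\e^{dh}$.
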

The corollary follows from the definition of Mahler's measure, since $h$ can be expressed as
\[h = \frac{1}{d}\ln |a_d|
+\frac{1}{d}\sum_{i=1}^d \max\{0,\ln |\alpha_i|\}.
\]

\begin{lemma}\label{lem:hl}
For an algebraic number $\alpha$ of degree $d$, we have
\[
\frac{1}{d}\ln \L(\alpha)-\ln 2
\le \h(\alpha)
\le \frac{1}{d}\ln \L(\alpha).
\]
\end{lemma}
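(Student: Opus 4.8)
The plan is to bound $\L(\alpha)$ (the $1$-norm of the coefficient vector of the defining polynomial of $\alpha$) above and below by $\H(\alpha)$ (the $\infty$-norm), and then invoke Lemma~\ref{lem:hH}. For a polynomial of degree $d$ with $d+1$ coefficients, the elementary norm inequalities give $\H(\alpha) \le \L(\alpha) \le (d+1)\H(\alpha)$. Taking logarithms and dividing by $d$ yields
\[
\tfrac{1}{d}\ln \H(\alpha) \le \tfrac{1}{d}\ln \L(\alpha) \le \tfrac{1}{d}\ln \H(\alpha) + \tfrac{1}{d}\ln(d+1).
\]
Substituting into the two sides of Lemma~\ref{lem:hH} would already yield something close to the claim, but with a spurious $\tfrac{1}{d}\ln(d+1)$ floating around; so a more careful route is needed for the clean bound stated.

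First I would prove the upper bound $\h(\alpha) \le \tfrac{1}{d}\ln \L(\alpha)$ directly. Since $\H(\alpha) \le \L(\alpha)$, the left inequality of Lemma~\ref{lem:hH} gives $\h(\alpha) \le \tfrac{1}{d}\ln \H(\alpha) + \tfrac{1}{2d}\ln(d+1) \le \tfrac{1}{d}\ln \L(\alpha) + \tfrac{1}{2d}\ln(d+1)$, which is still not quite sharp. Instead, the cleaner approach is via Mahler's measure: by Landau's inequality, $\M(f) \le \|f\|_2 \le \L(f)$, so by Lemma~\ref{lem:hmah}, $\h(\alpha) = \tfrac{1}{d}\ln\M(f) \le \tfrac{1}{d}\ln\L(f) = \tfrac{1}{d}\ln\L(\alpha)$. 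This gives the upper bound with no error term. For the lower bound, I would combine $\L(\alpha) \le (d+1)\H(\alpha)$ with the right inequality of Lemma~\ref{lem:hH}: $\tfrac{1}{d}\ln\L(\alpha) - \ln 2 \le \tfrac{1}{d}\ln\H(\alpha) + \tfrac{1}{d}\ln(d+1) - \ln 2$, and I need this to be $\le \h(\alpha)$. From Lemma~\ref{lem:hH}, $\tfrac{1}{d}\ln\H(\alpha) \le \h(\alpha) + \ln 2 - \tfrac{1}{d}\ln(d+1) + \tfrac{1}{d}\ln(d+1)$... so I would need $\tfrac{1}{d}\ln(d+1) - \ln 2 \le 0$, i.e. $d+1 \le 2^d$, which holds for all $d \ge 1$. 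Hence the lower bound follows.

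The only genuine obstacle is making sure Landau's inequality ($\M(f) \le \|f\|_2$) is available; if the paper prefers to stay self-contained, one can instead route everything through Lemma~\ref{lem:hH} and absorb the $\tfrac{1}{d}\ln(d+1)$ and $\tfrac{1}{2d}\ln(d+1)$ error terms using $d+1 \le 2^d$ (valid for $d \ge 1$, with the borderline case $d=1$ giving $2 \le 2$), which is exactly the same trick used on the lower side. In either presentation the verification is a short calculation; I expect the write-up to be four or five lines, citing Lemma~\ref{lem:hH} (or Lemma~\ref{lem:hmah} plus Landau) and the inequality $d+1\le 2^d$.
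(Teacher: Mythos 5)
Your upper-bound argument is fine and matches the paper: $\M(f)\le\L(f)$ (Landau or directly) together with Lemma~\ref{lem:hmah} gives $\h(\alpha)\le\tfrac1d\ln\L(\alpha)$ at once.

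The lower-bound argument, however, has a genuine gap. Chaining $\L(\alpha)\le(d+1)\H(\alpha)$ with the left inequality of Lemma~\ref{lem:hH} only gives
\[
\tfrac1d\ln\L(\alpha)-\ln 2 \;\le\; \tfrac1d\ln\H(\alpha)+\tfrac1d\ln(d+1)-\ln 2 \;\le\; \h(\alpha)+\tfrac1d\ln(d+1),
\]
which is strictly weaker than the claim. Invoking $d+1\le 2^d$ (i.e. $\tfrac1d\ln(d+1)\le\ln 2$) does \emph{not} rescue it: after you trade $\tfrac1d\ln(d+1)$ for $\ln 2$ and then apply $\tfrac1d\ln\H(\alpha)\le\h(\alpha)+\ln 2$, you land at $\tfrac1d\ln\L(\alpha)-2\ln 2\le\h(\alpha)$, which is off by an extra $\ln 2$. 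The difficulty is that both the passage $\L\le(d+1)\H$ and the passage $\H\le 2^d\M$ (which is what the left side of Lemma~\ref{lem:hH} encodes) are lossy, and stacking them cannot reproduce the sharp constant. The paper avoids this entirely by citing Mahler's direct inequality $2^{-d}\L(f)\le\M(f)\le\L(f)$, whose nontrivial half $\L(f)\le 2^d\M(f)$ follows from $|a_k|\le\binom{d}{k}\M(f)$ and summing: $\L(f)=\sum_k|a_k|\le\sum_k\binom{d}{k}\M(f)=2^d\M(f)$. Plugging this into Lemma~\ref{lem:hmah} gives the lower bound with the correct constant $\ln 2$. You should replace your Lemma~\ref{lem:hH} detour with this direct length--measure comparison.

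Your proposed ``self-contained'' alternative for the upper bound (route through Lemma~\ref{lem:hH} and absorb $\tfrac{1}{2d}\ln(d+1)$) likewise does not close: $\H\le\L$ gives $\h(\alpha)\le\tfrac1d\ln\L(\alpha)+\tfrac{1}{2d}\ln(d+1)$, and the surplus term has the wrong sign to be absorbed. Stick with the Landau/Mahler route for both directions.
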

The lemma follows from
the property that
$2^{-d}\L(f) \le \M(f) \le  \L(f)$~\cite{mahler1960application}.
Note that Property~\ref{prop:h},
Lemma~\ref{lem:hH}, and Lemma\ref{lem:hl} will be applied frequently in the proofs afterwards. 
For simplicity, sometimes we may omit the references to them.

\subsection{\bf Discriminant of Algebraic Extension}
Let $f\in\K[x]$ be a polynomial of the form~\eqref{eq:polyf}.
The discriminant of $f$ is defined by
\[
\mathrm{disc}(f)= \tfrac{(-1)^{d(d-1)/2}}{a_d}\Res(f,f').
\]

\begin{lemma}[Theorem 1 of \cite{mahler1964inequality}]
Let $f\in\C[x]$ be a polynomial of degree $d$. Then   
\[
|\mathrm{disc}(f)|\le d^d\M(f)^{2d-2}.
\]
\end{lemma}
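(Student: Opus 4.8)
The plan is to reduce the discriminant to a Vandermonde determinant and then apply Hadamard's inequality. Write $f(x)=a_d\prod_{i=1}^d(x-\alpha_i)$. Starting from the definition $\disc(f)=\tfrac{(-1)^{d(d-1)/2}}{a_d}\Res(f,f')$ and the product formula $\Res(f,f')=a_d^{d-1}\prod_{i=1}^d f'(\alpha_i)$ together with $f'(\alpha_i)=a_d\prod_{j\neq i}(\alpha_i-\alpha_j)$, one obtains the classical identity
\[
\disc(f)=a_d^{2d-2}\prod_{1\le i<j\le d}(\alpha_i-\alpha_j)^2=a_d^{2d-2}\,V^2,
\]
where $V=\det\big(\alpha_i^{\,j-1}\big)_{1\le i,j\le d}$ is the Vandermonde determinant of $\alpha_1,\dots,\alpha_d$, since $V=\prod_{i<j}(\alpha_j-\alpha_i)$ up to sign. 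Thus it suffices to bound $|V|$.

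Next I would apply Hadamard's inequality: the absolute value of a determinant is at most the product of the Euclidean norms of its rows. The $i$-th row is $(1,\alpha_i,\alpha_i^2,\dots,\alpha_i^{d-1})$, with squared norm $\sum_{k=0}^{d-1}|\alpha_i|^{2k}$. The key elementary estimate is that, for every $i$,
\[
\sum_{k=0}^{d-1}|\alpha_i|^{2k}\le d\,\max\{1,|\alpha_i|\}^{2(d-1)} ,
\]
which is checked by the two cases $|\alpha_i|\le 1$ (each summand is $\le 1$, so the sum is $\le d$, while the right-hand factor is $1$) and $|\alpha_i|>1$ (each summand is $\le|\alpha_i|^{2(d-1)}$). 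Combining,
\[
|V|\le\prod_{i=1}^d\Big(\sum_{k=0}^{d-1}|\alpha_i|^{2k}\Big)^{1/2}\le\prod_{i=1}^d\big(d\,\max\{1,|\alpha_i|\}^{2(d-1)}\big)^{1/2}=d^{d/2}\prod_{i=1}^d\max\{1,|\alpha_i|\}^{d-1}.
\]

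Finally, squaring this bound and multiplying by $|a_d|^{2d-2}$ gives
\[
|\disc(f)|=|a_d|^{2d-2}V^2\le d^d\Big(|a_d|\prod_{i=1}^d\max\{1,|\alpha_i|\}\Big)^{2d-2}=d^d\,\M(f)^{2d-2},
\]
which is the claimed inequality. Since the argument is a short chain of classical facts, I do not expect a genuine obstacle; the only points needing a moment's care are the per-row estimate in the Hadamard step and the sign/normalisation bookkeeping in passing from $\Res(f,f')$ to $a_d^{2d-2}\prod_{i<j}(\alpha_i-\alpha_j)^2$, which affects only signs and hence is harmless for the absolute value.
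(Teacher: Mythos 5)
Your proof is correct, and it is essentially Mahler's own argument from the cited 1964 paper: factor $\disc(f)=a_d^{2d-2}V^2$ via the Vandermonde determinant, apply Hadamard's inequality row-by-row, and bound each row norm $\sum_{k=0}^{d-1}|\alpha_i|^{2k}$ by $d\max\{1,|\alpha_i|\}^{2(d-1)}$. Note, however, that the paper itself gives no proof of this lemma; it is imported directly as Theorem~1 of \cite{mahler1964inequality} and used as a black box, so there is nothing in the paper's treatment to compare against beyond the statement.

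One small bookkeeping remark: in the final chain you write $|\disc(f)|=|a_d|^{2d-2}V^2$; since $V$ may be complex, this should read $|a_d|^{2d-2}|V|^2$, but as you already observe, the sign and modulus issues are harmless and the inequality is unaffected.
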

By Lemma~\ref{lem:hmah}, we immediately have the following corollary.
\begin{corollary}\label{coro:discrF}
Let $f\in\Z[x]$ be a polynomial of degree $d$ and Weil height $h$. Then   
\[
|\mathrm{disc}(f)| \le d^d \e^{2d(d-1)h}.
\]
\end{corollary}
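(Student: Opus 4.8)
The plan is to read off the bound directly from Mahler's inequality on the discriminant, stated just above, after rewriting Mahler's measure in terms of the Weil height $h$. Concretely, the immediately preceding lemma gives $|\disc(f)|\le d^d\,\M(f)^{2d-2}$ for any $f\in\C[x]$ of degree $d$, and in particular for $f\in\Z[x]$; so the only thing left is to control $\M(f)$ by $h$.

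Next I would invoke the relationship between Mahler's measure and the Weil height. Writing $f(x)=a_d\prod_{i=1}^d(x-\alpha_i)$, the Weil height of the polynomial $f$ is, by definition (this is exactly the expression already used to justify Corollary~\ref{coro:broot}, and is the content of Lemma~\ref{lem:hmah} when $f$ is a defining polynomial),
\[
h=\frac{1}{d}\ln|a_d|+\frac{1}{d}\sum_{i=1}^d\max\{0,\ln|\alpha_i|\}=\frac{1}{d}\ln\M(f),
\]
so that $\M(f)=\e^{dh}$. Substituting this into Mahler's inequality yields
\[
|\disc(f)|\le d^d\,\bigl(\e^{dh}\bigr)^{2d-2}=d^d\,\e^{dh(2d-2)}=d^d\,\e^{2d(d-1)h},
\]
which is the claimed bound.

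There is essentially no hard step here: the argument is a one‑line substitution once Mahler's inequality and the identity $\M(f)=\e^{dh}$ are in hand. The only point requiring a little care is bookkeeping — making sure that the exponent $2d-2$ in Mahler's inequality is matched with $2d(d-1)$ after raising $\e^{dh}$ to that power, and that the notion of ``Weil height of a polynomial'' used in the statement is the one normalised so that $h=\tfrac1d\ln\M(f)$ (equivalently, $h=\h(\alpha)$ when $f$ is the defining polynomial of an algebraic number $\alpha$), which is consistent with its earlier use in Corollary~\ref{coro:broot}.
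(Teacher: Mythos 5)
Your proof is correct and is precisely the argument the paper leaves implicit when it says the corollary ``immediately'' follows from Lemma~\ref{lem:hmah}: you substitute $\M(f)=\e^{dh}$ into Mahler's discriminant bound $|\disc(f)|\le d^d\M(f)^{2d-2}$. Your remark about the normalisation of the ``Weil height of a polynomial'' is the right thing to note, and it matches the paper's own usage in Corollary~\ref{coro:broot}.
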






\section{Primitive Element of Algebraic Extension}\label{sec:prim}

Recall the primitive element theorem, which is a fundamental result about field extension:
\begin{theorem}[Theorem 4.1.8 of \cite{cohen1993course}]\label{thm:simple}
Let $\K$ be a finite extension of $\Q$ with $[\K:\Q]=n$.
Then there exists an element $\theta\in\K$ such that 
\[
\K = \Q(\theta).
\]
The $\theta$ is called a primitive element,
whose defining polynomial is an irreducible polynomial of degree $n$.
\end{theorem}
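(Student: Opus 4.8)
The plan is to reduce to the case of two generators and then exhibit a primitive element as a rational linear combination of them. Since $[\K:\Q]=n$ is finite, we may write $\K = \Q(\alpha_1,\ldots,\alpha_m)$ for finitely many algebraic numbers $\alpha_i$ (e.g.\ take a $\Q$-basis), so by induction on $m$ it suffices to show that for any two algebraic numbers $\alpha,\beta$ there exists $\gamma\in\Q(\alpha,\beta)$ with $\Q(\alpha,\beta)=\Q(\gamma)$: the base case $m=1$ is trivial, and in the inductive step one has $\Q(\alpha_1,\ldots,\alpha_{m-1})=\Q(\theta')$ by the inductive hypothesis, whence $\K = \Q(\theta',\alpha_m) = \Q(\theta)$ by the two-generator case. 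For the two-generator case I would take $\gamma = \alpha + c\beta$ for a suitable $c\in\Q$ and prove $\beta\in\Q(\gamma)$; this then forces $\alpha = \gamma - c\beta \in \Q(\gamma)$, hence $\Q(\alpha,\beta)\subseteq\Q(\gamma)\subseteq\Q(\alpha,\beta)$.

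To choose $c$, let $f$ and $g$ be the defining polynomials of $\alpha$ and $\beta$ over $\Q$, with roots $\alpha=\alpha_1,\ldots,\alpha_r$ and $\beta=\beta_1,\ldots,\beta_s$ in $\A$. Since $\Q$ has characteristic zero, $g$ is separable and the $\beta_j$ are pairwise distinct --- this is the only place the characteristic enters. As $\Q$ is infinite, I can pick $c\in\Q$ different from each of the finitely many quotients $(\alpha_i-\alpha)/(\beta-\beta_j)$ with $1\le i\le r$ and $2\le j\le s$, and set $\gamma = \alpha + c\beta$. Now consider the auxiliary polynomial $h(x) = f(\gamma - cx)\in\Q(\gamma)[x]$: we have $h(\beta) = f(\alpha) = 0$ and $g(\beta)=0$, so $x-\beta$ divides $\gcd(h,g)$ in $\A[x]$; conversely, if $\beta_j$ with $j\ge 2$ satisfied $h(\beta_j)=0$ then $\gamma - c\beta_j$ would equal some root $\alpha_i$ of $f$, forcing $c = (\alpha_i-\alpha)/(\beta-\beta_j)$, a contradiction. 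Hence $\beta$ is the unique common root of $h$ and $g$, and since it is a simple root of $g$ we get $\gcd(h,g) = x-\beta$ in $\A[x]$. The key --- and essentially only --- point is that the gcd of two polynomials is unchanged under field extension, because it is produced by the Euclidean algorithm performed entirely inside the coefficient field; applying this to $h,g\in\Q(\gamma)[x]$ shows $x-\beta\in\Q(\gamma)[x]$, so $\beta\in\Q(\gamma)$. This last observation, together with the bookkeeping ensuring $c$ avoids the forbidden quotients, is the only real obstacle; the rest is formal.

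Finally, for the statement about the defining polynomial: once $\theta$ with $\K=\Q(\theta)$ is produced, its defining polynomial is irreducible over $\Q$ (a nonzero vanishing polynomial of least degree admits no nontrivial factorization, else a proper factor would vanish at $\theta$), and its degree equals $[\Q(\theta):\Q]=[\K:\Q]=n$, since $1,\theta,\ldots,\theta^{n-1}$ then form a $\Q$-basis of $\K$.
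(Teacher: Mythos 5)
Your argument is a correct and complete proof of the primitive element theorem, given by the standard route: induction on the number of generators, then for two generators $\alpha,\beta$ taking $\gamma=\alpha+c\beta$ with $c\in\Q$ avoiding the finitely many quotients $(\alpha_i-\alpha)/(\beta-\beta_j)$, and extracting $\beta$ from $\gcd\bigl(f(\gamma-cx),\,g(x)\bigr)=x-\beta$ using the invariance of the polynomial gcd under field extension. The only places where the hypothesis on $\Q$ is used --- infiniteness (to choose $c$) and characteristic zero (so that $g$ is separable and $\beta$ is a simple root, making the gcd degree one) --- are correctly flagged. The final paragraph correctly identifies the defining polynomial as irreducible of degree $n$.

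Note, however, that the paper does not prove this theorem at all: it is stated as a black-box citation to Theorem~4.1.8 of Cohen's textbook, so there is no ``paper's proof'' to compare against. That said, your construction $\gamma=\alpha+c\beta$ is precisely the one the paper \emph{operationalizes} in Section~\ref{sec:prim} via Loos's SIMPLE algorithm: there the defining polynomial of $\theta=\alpha+t\beta$ is produced as a resultant $\Res_y\bigl(A(x-ty),B(y)\bigr)$, and Lemma~\ref{lem:int_t} gives a concrete bound on how far one must search for an admissible integer $t$ (playing the role of your $c$, with the admissibility condition --- square-freeness of the resultant --- encoding your ``avoid the forbidden quotients''). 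The difference in emphasis is that your proof establishes mere existence of $\gamma$ nonconstructively over the infinite field $\Q$, whereas the paper needs explicit degree and height bounds for the primitive element, which is what the resultant formulation and the quantitative bound on $t$ deliver. As a proof of the stated theorem, yours is fine and is essentially the classical argument one would find in Cohen or any algebra text.
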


Let $\alpha_1,\ldots,\alpha_m$
be $m$ distinct algebraic numbers.
According to Theorem~\ref{thm:simple},
the algebraic extension $\Q(\alpha_1,\ldots,\alpha_m)$
generated by them is a simple extension
with a certain primitive element $\theta$.
We intend to give the upper bounds for the degree and the Weil height of such $\theta$, respectively.
To this end, the classic algebraic algorithm SIMPLE is invoked to construct the primitive element.


\subsection{For Two Generators}
In 1982, Loos~\cite{loos1982computing} presented the algorithm SIMPLE for computing a simple extension over $\Q$. 
Let $\alpha$ and $\beta$ be two algebraic numbers represented by their defining polynomials and isolating intervals, respectively.
The primitive element $\theta$ of the algebraic extension $\Q(\alpha,\beta)$ can be constructed.

Denote by $A$ and $B$ the defining polynomials of $\alpha$ and $\beta$.
In the algorithm SIMPLE,
the resultant $ r(x,t) = \Res_y(A(x-ty),B(y))$ is constructed such that it has the root $\theta=\alpha+t\beta$, 
the constant $t$ can be further chosen 
to ensure that
$r(x,t)$ has no multiple roots,
and $r(x,t)$ is exactly the defining polynomial of $\theta$.
The upper bound for such constant $t$ is given by the lemma below.
\begin{lemma}[Theorem~4.10 of~\cite{yokoyama1989computing}]\label{lem:int_t}
There is at least an integer $t$
among $\deg(A)\deg(B)(\deg(B)-1)$
distinct integers such that $r(x,t)$ is square-free.
\end{lemma}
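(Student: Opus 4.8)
The plan is to make the roots of $r(x,t)$ explicit, recast ``square-free'' as ``the roots are pairwise distinct'', determine exactly the exceptional values of $t$ for which this fails, and conclude with Dirichlet's box principle.

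I would begin by writing $A$ and $B$ over their roots, $A(x)=a\prod_{i=1}^{p}(x-\alpha_i)$ and $B(y)=b\prod_{j=1}^{q}(y-\beta_j)$ with $p=\deg A$ and $q=\deg B$. For $t\neq 0$ the polynomial $A(x-ty)$ has degree $p$ in $y$ with leading coefficient $a(-t)^{p}$, so by multiplicativity of the resultant over roots,
\[
r(x,t)=\Res_y\!\bigl(A(x-ty),\,B(y)\bigr)=(-1)^{pq}a^{q}b^{p}\prod_{i=1}^{p}\prod_{j=1}^{q}\bigl(x-(\alpha_i+t\beta_j)\bigr).
\]
Thus, up to the nonzero constant $\kappa:=(-1)^{pq}a^{q}b^{p}$, which does not depend on $t$, $r(x,t)$ is the monic polynomial in $x$ with the $pq$ roots $\alpha_i+t\beta_j$, and $\deg_x r(x,t)=pq$ for every $t\neq 0$. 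Since $A$ and $B$ are defining polynomials they are irreducible, hence square-free, so the $\alpha_i$ are pairwise distinct and so are the $\beta_j$.

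Because $\kappa\neq 0$ is constant, $r(x,t)$ is square-free (in $x$) if and only if the $pq$ numbers $\alpha_i+t\beta_j$ are pairwise distinct. Suppose $\alpha_i+t\beta_j=\alpha_k+t\beta_l$ with $(i,j)\neq(k,l)$, i.e.\ $\alpha_i-\alpha_k=t(\beta_l-\beta_j)$. If $j=l$ this forces $\alpha_i=\alpha_k$, hence $i=k$, a contradiction; so necessarily $j\neq l$ and $t=(\alpha_i-\alpha_k)/(\beta_l-\beta_j)$. Consequently $r(x,t)$ fails to be square-free only for $t$ in the finite exceptional set
\[
\mathcal E=\Bigl\{\tfrac{\alpha_i-\alpha_k}{\beta_l-\beta_j}\ :\ 1\le i,k\le p,\ \ 1\le j,l\le q,\ \ j\neq l\Bigr\}.
\]
(Equivalently, $\mathcal E$ is the set of roots in $t$ of $\disc_x r(x,t)=\kappa^{2pq-2}\prod\bigl((\alpha_i-\alpha_k)+t(\beta_j-\beta_l)\bigr)^{2}$, a polynomial in $t$ that is not identically zero, since each factor with $\beta_j\neq\beta_l$ is genuinely linear in $t$ and each factor with $\beta_j=\beta_l$ collapses to the nonzero constant $\alpha_i-\alpha_k$.) Note $0\in\mathcal E$ (take $i=k$), so every integer outside $\mathcal E$ is nonzero and the factorization above applies to it. Hence, if $|\mathcal E|<\deg A\cdot\deg B\cdot(\deg B-1)=pq(q-1)$, then among any $pq(q-1)$ distinct integers at least one lies outside $\mathcal E$, and any such $t$ makes $r(x,t)$ square-free, which is the claim.

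The remaining and only substantive point is the cardinality bound $|\mathcal E|<pq(q-1)$, and this is where I expect the real work. A direct count of the ratio set gives only a bound of order $\deg(A)^{2}\deg(B)(\deg(B)-1)$: the pairs $(j,l)$ with $j\neq l$ already cost a factor $q(q-1)$, and for each the numerator ranges over the $\le p^{2}$ differences $\alpha_i-\alpha_k$. Tightening this to the stated constant requires exploiting, in turn, that only the integer members of $\mathcal E$ are ever relevant, the involution $(i,k,j,l)\mapsto(k,i,l,j)$ which fixes each ratio and thus roughly halves the count, and --- the genuinely delicate ingredient --- the rigidity forced by irreducibility of $A$ and $B$: because the Galois group permutes $\{\alpha_i\}$ and $\{\beta_j\}$ transitively, the pairwise differences of conjugates cannot be rationally proportional to one another for too many index tuples, which caps how many of the ratios $(\alpha_i-\alpha_k)/(\beta_l-\beta_j)$ can simultaneously be integers. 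Carrying out this accounting precisely --- as Yokoyama does --- yields the bound $\deg A\cdot\deg B\cdot(\deg B-1)$; it is also the only step where irreducibility, rather than mere square-freeness, of $A$ and $B$ is needed.
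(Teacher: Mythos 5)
Your setup is correct and matches the standard framework: the factorization
$r(x,t)=(-1)^{pq}a^q b^p\prod_{i,j}\bigl(x-(\alpha_i+t\beta_j)\bigr)$,
the equivalence of square-freeness with pairwise distinctness of the $\alpha_i+t\beta_j$, and the observation that a collision forces $j\neq l$ and pins $t$ to the ratio $(\alpha_i-\alpha_k)/(\beta_l-\beta_j)$. You also correctly notice that a brute count of these ratios gives something on the order of $p^2q(q-1)$ rather than $pq(q-1)$, so the naive argument falls short. The problem is that this observation is precisely where your proof stops: the final paragraph announces what must be shown, guesses that irreducibility and Galois transitivity are the relevant tools, and then defers to Yokoyama. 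That is a genuine gap --- the single substantive step of the lemma is missing. (The paper itself offers no proof to compare against, as it cites the statement from Yokoyama et al.\ without reproducing the argument.)

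The missing step can in fact be closed cleanly along the lines you gesture at, but one has to be precise about how Galois theory reduces the count rather than merely invoke ``rigidity.'' The point is that only rational (indeed integer) exceptional $t$ matter, and a rational $t$ is fixed by every automorphism $\sigma$ of the splitting field of $AB$. If $r(x,t)$ has a repeated root, some $\alpha_i+t\beta_j=\alpha_k+t\beta_l$ with $j\neq l$; since $A$ is irreducible one may choose $\sigma$ with $\sigma(\alpha_i)=\alpha_1$, and applying $\sigma$ to the collision produces a collision of the special form $\alpha_1+t\beta_{j'}=\alpha_{k'}+t\beta_{l'}$ with $j'\neq l'$. Thus every rational exceptional $t$ already appears among the ratios $(\alpha_1-\alpha_{k'})/(\beta_{l'}-\beta_{j'})$, of which there are at most $p\cdot q(q-1)$; and since the $q(q-1)$ triples with $k'=1$ all collapse to $t=0$, there are at most $(p-1)q(q-1)+1<pq(q-1)$ of them (for $q\geq 2$). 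That delivers the stated bound. So the obstacle you identified is real, but it is resolved not by a ``cap on how many ratios can be integers'' in the diffuse sense you describe, but by the sharper observation that Galois symmetry lets one normalize $i=1$ for rational $t$, collapsing the index range on $(i,k)$ from $p^2$ to $p$.
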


Following the constructive procedure in the algorithm SIMPLE,
we can give the bounds for the degree and the Weil height of such $\theta$.
\begin{lemma}\label{lem:simple2}
Let $\alpha$ and $\beta$ be two algebraic numbers represented by their defining polynomials $A$ and $B$ with the maximal degree $d$ and the maximal Weil height $h$.
Then, there exists a primitive element $\theta$ of the algebraic extension $\Q(\alpha,\beta)$ 
satisfying
\[
\begin{aligned}
 \deg(\theta) \le d^2 
 \quad\text{and}\quad
 \h(\theta) \le   2dh + 3d \ln d +2d\ln 2.
\end{aligned}
\]
\end{lemma}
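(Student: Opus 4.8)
The plan is to follow the constructive procedure in the algorithm SIMPLE step by step, tracking how the degree and Weil height of the intermediate polynomials grow, and then convert the final height-of-polynomial bound into a Weil-height bound via Lemma~\ref{lem:hl}. First I would set $\theta = \alpha + t\beta$ for a suitable integer $t$, whose existence with $|t| \le \deg(A)\deg(B)(\deg(B)-1) \le d^2(d-1) < d^3$ is guaranteed by Lemma~\ref{lem:int_t} so that the defining polynomial of $\theta$ is (up to content) the square-free resultant $r(x,t) = \Res_y(A(x-ty), B(y))$. The degree bound is immediate: $r(x,t)$ has degree $\deg_x = \deg(A)\deg(B) \le d^2$ in $x$ (this is the standard degree count for the resultant of $A(x-ty)$ and $B(y)$ viewed in $y$), hence $\deg(\theta) \le d^2$.

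For the height, I would first bound the length $\L(A(x-ty))$ as a polynomial in $\Z[x,y]$. Substituting $x \mapsto x - ty$ into $A$ of degree $\le d$: each monomial $a_i x^i$ becomes $a_i(x-ty)^i$, whose length is $|a_i|(1+|t|)^i \le |a_i|(1+d^3)^d$, so $\L(A(x-ty)) \le \L(A)(1+d^3)^d \le (d+1)\H(A)(1+d^3)^d$. Then I would apply Corollary~\ref{coro:semi} to $f = A(x-ty)$ and $g = B(y)$ in $\Z[x,y]$ as polynomials in $y$: since $\deg_y(A(x-ty)) \le d$ and $\deg_y(B) \le d$, we get
\[
\L(r(x,t)) = \L(\Res_y(f,g)) \le \L(f)^{d} \L(g)^{d} \le \big((d+1)\H(A)(1+d^3)^d\big)^d \cdot \big((d+1)\H(B)\big)^d.
\]
Taking logarithms and using $\H(A), \H(B) \le \e^{dh + \frac{1}{2d}\ln(d+1)}$ from Lemma~\ref{lem:hH} (or more simply bounding $\ln \H(\cdot)$ by $d(h + \ln 2) + \frac{1}{2}\ln(d+1)$ via the same lemma), one obtains a bound of the shape $\ln \L(r(x,t)) \le d\big(2dh + O(d\ln d)\big)$. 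Finally, since $r(x,t)$ (after removing content, which only decreases length) is a defining polynomial of $\theta$ of degree $\le d^2$, Lemma~\ref{lem:hl} gives $\h(\theta) \le \frac{1}{\deg(\theta)}\ln \L(\theta) \le \frac{1}{1}\cdot\frac{1}{d}\ln\L(r(x,t))$... — here I must be careful: Lemma~\ref{lem:hl} divides by the actual degree of $\theta$, which may be smaller than $d^2$, so to be safe I would instead use $\h(\theta) \le \frac{1}{\deg(\theta)}\ln\L(\theta) \le \ln\L(r(x,t))$ only when $\deg(\theta)\ge 1$, and then carefully extract the factor; more precisely I would keep the bound $\ln\L(r(x,t)) \le d \cdot (2dh + 3d\ln d + \text{const}\cdot d)$ so that dividing by $\deg(\theta)$, which is at most $d^2$ but the relevant inequality only needs $\ge 1$... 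The cleanest route is: the content-free defining polynomial $P$ of $\theta$ satisfies $\L(P) \le \L(r(x,t))$ and $\deg P \le d^2$, and by Lemma~\ref{lem:hl}, $\h(\theta) \le \frac{1}{\deg P}\ln \L(P)$; bounding $\frac{1}{\deg P} \le 1$ is too lossy, so instead I note that each factor in the product bound above already carries the exponent $d$, i.e. $\ln\L(r(x,t)) \le d\big[(2dh + 3d\ln d + 2d\ln 2) + \text{lower order}\big]$, and dividing by $\deg P$ — which could be as small as... no. The honest approach: carry the $d$ explicitly, show $\ln \L(P) \le d(2dh + 3d\ln d + 2d\ln 2)$ (absorbing the $(d+1)$-factors and the $\ln(d+1)$ terms into the $3d\ln d + 2d\ln 2$ slack), then since $\deg P \ge d$ would be false in general — so I simply use $\h(\theta)\le \frac1{\deg P}\ln\L(P)$ and separately observe $\deg P \ge 1$, giving $\h(\theta) \le \ln\L(P)$, which does not match.

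The resolution, and what the author surely does: bound $\frac{1}{\deg(\theta)}$ is replaced by observing that in the estimate $\L(r) \le \L(f)^d\L(g)^d$ the exponent is $d = \deg_y$, not $\deg_x(r)$; so $\frac{1}{\deg_x r}\ln\L(r) \le \frac{d}{\deg_x r}(\ldots)$, and while $\deg_x r \le d^2$, we can also bound $\deg_x r \ge \deg(A) \ge 1$, still not enough. The actual trick is that $\h(\theta) = \h(\alpha+t\beta) \le \ln 2 + \h(\alpha) + \h(t\beta) = \ln 2 + \h(\alpha) + \h(\beta) + \ln|t|$ by Property~\ref{prop:h}, using $\h(t) = \ln|t|$ for the integer $t$! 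This bypasses the resultant entirely for the height: $\h(\theta) \le \ln 2 + h + h + \ln(d^3) = 2h + 3\ln d + \ln 2 \le 2dh + 3d\ln d + 2d\ln 2$ trivially since $d\ge 1$. So I would prove the degree bound from the resultant (needing the square-free $t$ to exist, hence Lemma~\ref{lem:int_t}) and the height bound purely from the sub-additivity of Weil height. The main obstacle is thus not the height at all but ensuring the resultant $r(x,t)$, once made square-free and primitive, genuinely is \emph{the} defining polynomial of $\theta$ (so that $\deg(\theta) = \deg_x r \le d^2$); this is exactly the content of the SIMPLE algorithm's correctness together with Lemma~\ref{lem:int_t}, which I would invoke directly rather than reprove.
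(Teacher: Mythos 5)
Your final argument is correct, but it is genuinely different from the paper's proof, and your guess about ``what the author surely does'' is wrong. The paper takes exactly the resultant-length route you abandoned: it bounds $\L(A(x-ty))$ term-by-term and then via Cauchy--Schwarz, applies Corollary~\ref{coro:semi} to get $\ln\L(r) \le 2d\ln L + 3d^2\ln d$ (where $L = \max(\L(A),\L(B))$), and converts to Weil height by Lemma~\ref{lem:hl}. The observation you were missing to make that route work is that $\deg(\theta) \ge d$: since $\Q(\theta) = \Q(\alpha,\beta)$ contains both $\Q(\alpha)$ and $\Q(\beta)$, its degree is a common multiple of $\deg A$ and $\deg B$, hence at least $\max(\deg A,\deg B) = d$. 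Dividing the length estimate by $\deg(\theta)\ge d$ via Lemma~\ref{lem:hl} and using $\ln L \le d(h+\ln 2)$ then gives $\h(\theta) \le 2dh + 2d\ln 2 + 3d\ln d$, which is the stated bound. So your hesitation at ``dividing by $\deg(\theta)$'' did not require abandoning the approach, only this degree lower bound.

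That said, the route you actually settled on is valid and strictly sharper. Applying Property~\ref{prop:h} directly to $\theta = \alpha + t\beta$ gives $\h(\theta) \le \ln 2 + \h(\alpha) + \h(\beta) + \ln|t| \le 2h + 3\ln d + \ln 2$ (using $1 \le t \le d^2(d-1) < d^3$ from Lemma~\ref{lem:int_t}), which is smaller than the stated bound by roughly a factor of $d$, and it needs the resultant only to certify $\deg(\theta) \le d^2$ and the existence of a good integer $t$. Your approach buys simplicity (no Cauchy--Schwarz, no Corollary~\ref{coro:semi}, no Lemma~\ref{lem:hl}) and a tighter constant; the paper's approach tracks $\L(r)$ because it follows Loos's SIMPLE algorithm closely, which is perhaps pedagogically motivated but costs a factor of $d$ in the resulting height bound.
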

\begin{proof}
We consider the positive integer $t$ as a fixed constant.
Then, the bound for the degree of $\theta$ is obvious since
$\deg(r(x,t))\le \deg_x(A(x-ty))\deg_y(B(y))$.
Without loss of generality,
we consider the polynomial $A$ of the form~\eqref{eq:polyf}.
For the $k$th term of $A(x-ty)$, we have
$\L(a_k(x-ty)^k) \le  |a_k|\sum_{i=0}^k \tbinom{k}{i}  t^i 
= |a_k|(t+1)^k$.
Let $L$ be the maximal length of $A$ and $B$.
Then, by the subadditivity of the length function, we have
\[
\begin{aligned}
    \L(A(x-ty)) & \le  \sum_{k=0}^{d} |a_k|(t+1)^k 
\le  \sqrt{\sum_{k=0}^{d} |a_k|^2 \cdot \sum_{k=0}^{d} (t+1)^{2k}} \\
& \le  L \cdot \sqrt{ \frac{(t+1)^{2({d}+1)}-1}{(t+1)^2-1} }
\le  L (t+1)^d(1+\tfrac{1}{t})^{\tfrac{1}{2}}.
\end{aligned}
\]
From Lemma~\ref{lem:int_t}, we have $1\le t\le d^2(d-1)$.
By Corollary~\ref{coro:semi}, 
\[
\begin{aligned}
\ln \L(r) & \le  2d\ln L + d^2\ln (t+1) + \frac{1}{2}d\ln(1+\tfrac{1}{t})\\
& \le 2d\ln L + d^2\ln(d^3-d^2+1) + \tfrac{1}{2}d\ln 2\\
& \le 2d \ln L + 3d^2 \ln d. \qquad\text{($d\ge 2$)}
\end{aligned}
\]
With $\deg(\theta)\ge d$, the proof is completed by Lemma~\ref{lem:hl}.
\end{proof}

\begin{remark}
In Section~5 of \cite{loos1982computing}, 
Loos also gave the bound for the length
of $\theta$ without the proof. 
That is $\L(\theta)=\O(L^{2d})$, in which
$L$ denotes the maximal length of $\alpha$
and $\beta$.
By Lemma~\ref{lem:hl}, it is not hard
to check this bound is consistent with 
the result we obtained. 
However, it cannot be further used to infer explicit results since it is established on the Landau notation.
\end{remark}

\subsection{For Multiple Generators}
Towards the situation of
the algebraic extension generated 
by multiple algebraic numbers, 
we construct the primitive element  
by recursively applying the algorithm SIMPLE on two primitive elements of algebraic extensions generated
by fewer algebraic numbers.
Intuitively, we demonstrate the procedure by a tree structure in Figure~\ref{fig:tree}.
By regarding the $m$ generators $\alpha_1,\ldots,\alpha_m$ as the leaf nodes,
a full binary tree with the minimal depth can be constructed
from these elements to the root $\theta$,
where each non-leaf node is a primitive element computed
from its two children.
By applying Lemma~\ref{lem:simple2}
along this procedure,
we can further bound
the degree and the Weil height
of the primitive element $\theta$ constructed.
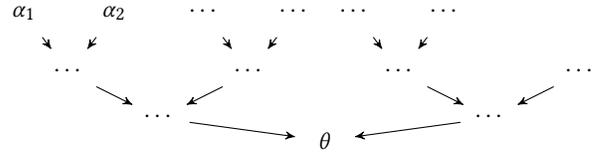
\begin{figure}[ht]
\centering
\begin{tikzpicture}[->,>=stealth',auto,node distance=1cm,
thin,inner sep=1pt]
\node[state,draw=white](s11){$\theta$};
\node[state,draw=white](s12)[above of=s11,xshift=-2.2cm,yshift=-0.7cm]{$\cdots$};
\node[state,draw=white](s21)[above of=s11,xshift=2.2cm,yshift=-0.7cm]{$\cdots$};
\node[state,draw=white](s13)[above of=s12,xshift=-1.2cm,yshift=-0.4cm]{$\cdots$};
\node[state,draw=white](s23)[above of=s12,xshift=1.2cm,yshift=-0.4cm]{$\cdots$};
\node[state,draw=white](s32)[above of=s21,xshift=-1.2cm,yshift=-0.4cm]{$\cdots$};
\node[state,draw=white](s31)[above of=s21,xshift=1.2cm,yshift=-0.4cm]{$\cdots$};
\node[state,draw=white](s14)[above of=s13,xshift=-.6cm,yshift=-0.2cm]{$\alpha_1$};
\node[state,draw=white](s25)[above of=s13,xshift=.6cm,yshift=-0.2cm]{$\alpha_2$};
\node[state,draw=white](s35)[above of=s23,xshift=-.6cm,yshift=-0.2cm]{$\cdots$};
\node[state,draw=white](s34)[above of=s23,xshift=.6cm,yshift=-0.2cm]{$\cdots$};
\node[state,draw=white](s43)[above of=s32,xshift=-.6cm,yshift=-0.2cm]{$\cdots$};
\node[state,draw=white](s53)[above of=s32,xshift=.6cm,yshift=-0.2cm]{$\cdots$};

\draw[<-](s11)edge[]node{}(s12);
\draw[<-](s11)edge[]node{}(s21);
\draw[<-](s12)edge[]node{}(s13);
\draw[<-](s12)edge[]node{}(s23);
\draw[<-](s21)edge[]node{}(s32);
\draw[<-](s21)edge[]node{}(s31);
\draw[<-](s13)edge[]node{}(s14);
\draw[<-](s13)edge[]node{}(s25);
\draw[<-](s23)edge[]node{}(s35);
\draw[<-](s23)edge[]node{}(s34);
\draw[<-](s32)edge[]node{}(s53);
\draw[<-](s32)edge[]node{}(s43);
\end{tikzpicture}
\caption{The primitive element $\theta$ of an algebraic extension can be constructed recursively. The full binary tree is not unique since the order of the generators as the leaf nodes is arbitrary.}\label{fig:tree}
\end{figure}

\begin{lemma}\label{lem:multi_simple}
    Let $\alpha_1,\ldots,\alpha_m$ be $m$ algebraic numbers
    with the maximal degree $n$ and the maximal Weil height $h$.
    Then there exists a primitive element $\theta$ of the algebraic extension $\Q(\alpha_1,\ldots,\alpha_m)$
satisfying
    \[
    \begin{aligned}
\deg(\theta)\le d^{2m}
\quad\text{and}\quad
\h(\theta) \le md^{2m}(\tfrac{2h}{d}+3).
    \end{aligned}
    \]
\end{lemma}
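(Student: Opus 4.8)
The plan is to realise the recursive construction of Figure~\ref{fig:tree} as $K := \lceil \log m\rceil$ successive rounds of pairwise amalgamation, and to propagate the degree and height estimates of Lemma~\ref{lem:simple2} through these rounds. Starting from $\alpha_1,\ldots,\alpha_m$, in each round I would group the primitive elements obtained so far into pairs and, via the algorithm SIMPLE, replace every pair by a primitive element of the corresponding subextension (when the count is odd, one element is carried unchanged into the next round). Since after round $i$ there remain $\lceil m/2^i\rceil$ elements, after $K$ rounds a single $\theta$ with $\Q(\theta)=\Q(\alpha_1,\ldots,\alpha_m)$ is left. The core of the argument is then an induction on $i$ showing that every element present after round $i$ has degree at most $d^{2^i}$ and Weil height at most $H_i$, where $H_0:=h$ and $H_i$ is defined by the recurrence derived below (here $d$ denotes the common maximal degree bound).

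The degree bound is the easy half: the base case is immediate, and if two inputs have degree $\le d^{2^{i-1}}$, then the degree part of Lemma~\ref{lem:simple2}, applied with its ``$d$'' taken to be $d^{2^{i-1}}$, gives their amalgam degree $\le (d^{2^{i-1}})^2=d^{2^i}$; a carried-over element trivially still obeys the bound. Hence $\deg(\theta)\le d^{2^K}$, and since $\lceil\log m\rceil<\log m+1$ forces $2^K<2m$, I get $\deg(\theta)\le d^{2m}$.

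For the height, feeding the height part of Lemma~\ref{lem:simple2} (with ``$d$'' $=d^{2^{i-1}}$ and ``$h$'' $=H_{i-1}$) into the same induction gives the linear recurrence $H_i\le 2\,d^{2^{i-1}}H_{i-1}+d^{2^{i-1}}(3\cdot 2^{i-1}\ln d+2\ln2)$, and $H_{i-1}\le H_i$ takes care of carried-over elements. Unrolling this from $H_0=h$ and setting $M:=2^K$ yields a bound of the shape
\[
H_K\;\le\;M\,d^{M-1}h\;+\;\sum_{i=1}^{K}2^{K-i}\,d^{\,M-2^{i-1}}\bigl(3\cdot2^{i-1}\ln d+2\ln2\bigr).
\]
The leading term is at most $2m\,d^{2m-1}h=m\,d^{2m}\cdot\tfrac{2h}{d}$ because $M\le2m$, so it is exactly the first part of the target. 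For the residual sum I would split off the $\ln d$ and $\ln 2$ pieces and use that, for $d\ge2$, the sums $\sum_i d^{-2^{i-1}}$ and $\sum_i 2^{K-i}d^{-2^{i-1}}$ are geometric-type and bounded by $\tfrac{d}{d-1}d^{-1}$ and $\tfrac{2d}{2d-1}\,2^{K-1}d^{-1}$ respectively; this collapses the residual to $2^K d^M\bigl(\tfrac{3\ln d}{2(d-1)}+\tfrac{2\ln2}{2d-1}\bigr)$, whose bracket is an absolute constant (maximised at $d=2$, where it equals $\tfrac{13}{6}\ln2<2$). A short case distinction on whether $m$ is a power of two — in which case $2^K=m$, so there is a whole factor $d^m$ of slack — or not — in which case $2^K\le2m-2$ — then bounds $2^K d^M$ times this constant by $3m\,d^{2m}$, and adding the two parts gives $H_K\le m\,d^{2m}\bigl(\tfrac{2h}{d}+3\bigr)$. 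The degenerate case $d=1$, where $\Q(\alpha_1,\ldots,\alpha_m)=\Q$, is handled separately and trivially.

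The routine part is setting up and unrolling the two recurrences; the delicate point — and the main obstacle — is the constant accounting in the last step, namely checking that the total accumulated contribution of the ``$3d\ln d+2d\ln2$'' error terms of Lemma~\ref{lem:simple2}, summed over all $K$ rounds, still fits under the single multiplicative ``$+3$'' of the target. What makes it go through is the double-exponential growth of the degree bounds $d^{2^i}$: these quantities appear in the denominators of the unrolled sum and damp the geometric factors $2^{K-i}$ fast enough that the residual is a bounded constant rather than something growing with $m$. The easiest way to fail here is to apply $2^K\le2m$ too bluntly in the case where $m$ is a power of two, which inflates the bound by a spurious factor of $m$.
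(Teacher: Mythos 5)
Your proposal is correct and follows essentially the same route as the paper's proof: both realise $\theta$ via a balanced binary tree of pairwise SIMPLE amalgamations, propagate the degree and height bounds of Lemma~\ref{lem:simple2} through the recurrence $h_i\le 2d_{i-1}h_{i-1}+3d_{i-1}\ln d_{i-1}+2d_{i-1}\ln 2$, and unroll to separate the $2^\ell h\,d^{2^\ell-1}$ main term from the accumulated error term. The only divergence is cosmetic: the paper bounds the tail sum $\sum_{i\ge 0}\bigl(\tfrac{3\ln d}{d^{2^i}}+\tfrac{2\ln 2}{2^i d^{2^i}}\bigr)$ directly by $3$ for $d\ge 2$ and then simply invokes $2^{\ell-1}\le m$, whereas you over-estimate it geometrically (getting an absolute constant $<4$ rather than $\le 3$) and then recover the lost factor via the case split on whether $m$ is a power of two — a valid work-around made necessary only by your slightly cruder tail estimate, not by anything intrinsic to the problem.
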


\begin{proof}
We indicate the layer index of the nodes by $0,1,\ldots,{\ell}$
from the layer of leaves to the root,
and denote by $d_i$ and $h_i$ the maximal degree and the maximal Weil height of the elements at the layer of index $i$. 
By Lemma~\ref{lem:simple2}, we have $d_i \le d_{i-1}^2 \le d^{2^i}$ and
\[h_i \le 2d_{i-1}h_{i-1} + D_{i-1}, 
\quad\text{where}\quad D_i =3 d_i \ln d_i + 2 d_i \ln 2.\]
So, the upper bound for the Weil height of $\theta$ can be inferred as
$
h_{\ell} \le  
2d_{{\ell}-1}(\ldots (2d_1(2d_0h_0+D_0)+D_1)\cdots)+D_{{\ell}-1}$,
which can be expanded as the sum of $h'$ and $h''$ where
\[
 \begin{aligned}
h' & = 2^{\ell} h_0 \prod_{i=0}^{{\ell}-1} d_{i} 
 \le  2^{\ell} h  d^{\sum_{i=0}^{{\ell}-1} 2^i} 
=  2^{\ell} h d^{2^\ell -1},\\
h'' & = \sum_{i=0}^{{\ell}-1}
\Big (D_i 2^{{\ell}-i-1} \linebreak \prod_{k=i+1}^{{\ell}-1}d_k \Big) \\
& \le 
d^{2^\ell} 2^{\ell-1}
\sum_{i=0}^{\ell-1}
\Big(
\tfrac{3\ln d}{d^{2^i}} +\tfrac{2\ln 2}{d^{2^i} 2^i}
\Big) \\
& \le 
3d^{2^{\ell}}2^{{\ell}-1}. \qquad\text{($d\ge 2$)}
 \end{aligned}
\]
Then, the proof is completed by
${\ell}=\lceil \log m \rceil \le \log m +1$.
\end{proof}

\begin{remark}
It should be noted that the rational univariate reduction can also be regarded as an effective version of the primitive element theorem which has been around since the work of Kronecker. The set of the defining polynomial of the generators can be considered as a system, and some similar bounds for the primitive element can be derived from the $u$-resultant (see \cite{saugata2003geometry} and Gap Theorem in Section 3.3 of \cite{canny1988complexity}).
\end{remark}

\begin{remark}
Note that in the proofs
of Lemma~\ref{lem:simple2} and \ref{lem:multi_simple},
$d\ge 2$ is applied 
to scale the inequalities.
However, we omit this condition to make the results more generalised
because it is easy to check that the bounds obtained are still valid when $d=1$
by considering $\theta=1$ as 
the primitive element. 
\end{remark}

\section{factorisation over Algebraic Extension}\label{sect:fact}
To convert the linear forms $\lambda$ into the form of exponential polynomial
with $\Q$-linearly independent exponents,
we intend to represent the exponents $\alpha_1,\ldots,\alpha_m$
by linear combinations of a basis
consisting of the powers of a primitive element
of $\Q(\alpha_1,\ldots,\alpha_m)$.
To this aim, the technique utilised here 
is to factorise 
the defining polynomial $f_i(x)$
of each exponent $\alpha_i$
into monic irreducible factors
over the algebraic extension.

In 1983, Lenstral~\cite{lenstra1983factoring} presented
an algorithm for factorising polynomial over algebraic extension,
which is a generalisation of the factorisation over $\Q$~\cite{lenstra1982factoring}. 
Specifically, for an algebraic integer $\theta$,
any polynomial $f$ over $\Q(\theta)$ can be factorised
into monic irreducible factors in $\tfrac{1}{T}\Z(\theta)[x]$,
where $T$ is a positive integer.
Note that a monic polynomial $g$ in $\frac{1}{T}\Z(\theta)[x]$ can be represented by 
$a_{d,0}x^d+\sum_{i=0}^{d-1}\sum_{j=0}^{d_\theta-1} a_{i,j} \theta^j x^i$, which is identified by the $(d_{\theta}d+1)$-dimensional coefficient vector.
Then, the upper bound for the $2$-norm of the monic factors
has been provided in the proof of Proposition~3.11 in \cite{lenstra1983factoring}, which can be
stated as below.
\begin{theorem}\label{thm:fact}
Let $\theta$ be an algebraic integer with the defining polynomial $p_\theta$ of degree $d_\theta$.
Let $f\in \tfrac{1}{t}\Z(\theta)[x]$ with positive integer $t$ be a monic polynomial of degree $d$.
Then, $f$ can be factorised into the monic irreducible factors in $\tfrac{1}{T}\Z[\theta][x]$, where $T = t|\mathrm{disc}(p_\theta)|$
and the $2$-norm of each factor of degree $\le D$ is at most
\[
\H(f)\big( 2(d+1) d_\theta^3 (d_\theta - 1)^{d_\theta -1} \tbinom{2D}{D} \big)^{\tfrac{1}{2}}
\|p_\theta\|_2^{2(d_\theta -1)}|\mathrm{disc}(p_\theta)|^{-\tfrac{1}{2}}.
\]
\end{theorem}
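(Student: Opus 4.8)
The plan is to reconstruct the estimate underlying Proposition~3.11 of~\cite{lenstra1983factoring}. Since the monic irreducible factors of $f$ multiply to $f$, it suffices to bound one monic factor $g\mid f$ in $\K[x]$, $\K=\Q(\theta)$, with $\deg(g)=D$. I would first dispose of the denominator: as $f$ is monic with coefficients in $\tfrac1t\Z[\theta]\subseteq\tfrac1t\mathcal{O}_{\K}$, multiplying a root $\alpha$ of $f$ by $t$ produces an algebraic integer, so the elementary symmetric functions of the roots of $g$ lie in $\tfrac1{t'}\mathcal{O}_{\K}$ for some power $t'$ of $t$; combining this with $|\mathrm{disc}(p_\theta)|\,\mathcal{O}_{\K}\subseteq\Z[\theta]$ (valid since $m:=[\mathcal{O}_{\K}:\Z[\theta]]$ satisfies $m\,\mathcal{O}_{\K}\subseteq\Z[\theta]$ and $m\mid|\mathrm{disc}(p_\theta)|$) places $g$ in $\tfrac1T\Z[\theta][x]$ with $T=t\,|\mathrm{disc}(p_\theta)|$ as claimed. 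Writing $g=\sum_{i=0}^{D}c_i x^i$ with $c_i=\sum_{j=0}^{d_\theta-1}c_{ij}\theta^j$, the task reduces to bounding $\|g\|_2^2=\sum_{i,j}c_{ij}^2$.

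Second, I would pass to the conjugates of $\theta$. Let $\theta^{(1)},\dots,\theta^{(d_\theta)}$ be the roots of $p_\theta$ and $\sigma_k\colon\theta\mapsto\theta^{(k)}$ the associated embeddings of $\K$ into $\C$; put $f^{(k)}=\sigma_k(f)$ and $g^{(k)}=\sigma_k(g)$, so each $g^{(k)}$ is a monic complex factor of the monic $f^{(k)}$ of degree $\le D$. The classical coefficient bound $|(g^{(k)})_i|\le\binom{D}{i}\M(g^{(k)})$, together with Vandermonde's identity $\sum_i\binom{D}{i}^2=\binom{2D}{D}$, gives $\|g^{(k)}\|_2^2\le\binom{2D}{D}\,\M(g^{(k)})^2\le\binom{2D}{D}\,\M(f^{(k)})^2$ (the last step because $g^{(k)}\mid f^{(k)}$ with both monic, so the roots of $g^{(k)}$ are among those of $f^{(k)}$). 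Finally $\M(f^{(k)})\le\|f^{(k)}\|_2$ by Landau's inequality, and expanding the coefficients of $f$ in the power basis bounds $\|f^{(k)}\|_2$ uniformly in $k$ in terms of $\H(f)$, up to factors of $d+1$, $d_\theta$ and $\M(p_\theta)\le\|p_\theta\|_2$; this accounts for $\H(f)$, the $(d+1)$, and part of the $\|p_\theta\|_2$ power in the final constant.

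Third, and where I expect the real work, comes the reassembly of $g$ from the $g^{(k)}$. For each fixed $i$, the coordinates are linked by the Vandermonde system $\big(c_i^{(1)},\dots,c_i^{(d_\theta)}\big)^{\!\top}=V\,\big(c_{i0},\dots,c_{i,d_\theta-1}\big)^{\!\top}$, $V=\big((\theta^{(k)})^{j}\big)_{k,j}$, with $\det(V)^2=\mathrm{disc}(p_\theta)\neq0$ since $p_\theta$ is monic. Cramer's rule writes $c_{ij}$ as $\det(V)^{-1}$ times a signed combination of the $c_i^{(k)}$ weighted by the $(d_\theta-1)\times(d_\theta-1)$ minors of $V$, and Hadamard's inequality bounds each minor by $(d_\theta-1)^{(d_\theta-1)/2}\prod_k\max(1,|\theta^{(k)}|)^{d_\theta-1}=(d_\theta-1)^{(d_\theta-1)/2}\M(p_\theta)^{d_\theta-1}\le(d_\theta-1)^{(d_\theta-1)/2}\|p_\theta\|_2^{d_\theta-1}$; this is the origin of $|\mathrm{disc}(p_\theta)|^{-1/2}$ and of the factors $(d_\theta-1)^{d_\theta-1}$ and $\|p_\theta\|_2^{d_\theta-1}$. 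Summing $|c_{ij}|^2$ over the $d_\theta$ values of $j$ and the $\le d+1$ values of $i$, applying Cauchy--Schwarz to the sums over conjugates, and inserting $\sum_k\|g^{(k)}\|_2^2\le d_\theta\binom{2D}{D}\max_k\M(f^{(k)})^2$ from Step~2, one obtains a bound of exactly the shape asserted in the statement. The crux is the constant bookkeeping: each Cauchy--Schwarz or $\max$-over-$\ell^2$ passage and each replacement $\M(p_\theta)\le\|p_\theta\|_2$ must be arranged so that the accumulated powers of $d_\theta$, of $\|p_\theta\|_2$ and of $|\mathrm{disc}(p_\theta)|^{-1}$ collapse precisely to those in the statement, not to something weaker; verifying the exact value $T=t\,|\mathrm{disc}(p_\theta)|$ of the denominator belongs here too. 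I would carry out the remaining routine estimates by following the computation of~\cite{lenstra1983factoring} line by line.
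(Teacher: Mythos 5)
The paper does not prove this statement; it imports it verbatim from the proof of Proposition~3.11 in \cite{lenstra1983factoring}, so there is no in-paper proof against which to compare. Your reconstruction is nevertheless faithful to the Lenstra-style argument: the conductor bound $\disc(p_\theta)\,\mathcal{O}_{\K}\subseteq\Z[\theta]$ for the denominator, passage to the $d_\theta$ embeddings, the Mahler-measure coefficient bound $|(g^{(k)})_i|\le\binom{D}{i}\M(g^{(k)})$ combined with Vandermonde's identity $\sum_i\binom{D}{i}^2=\binom{2D}{D}$ and Landau's inequality $\M\le\|\cdot\|_2$, and reassembly via Cramer's rule on the Vandermonde system with Hadamard's bound on the cofactors. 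This is the standard machinery for such $2$-norm estimates and produces a bound of precisely the shape in the statement; the remaining work, as you note, is bookkeeping the powers of $d_\theta$ and $\|p_\theta\|_2$ to land exactly on the displayed constant rather than a slightly weaker one.

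One point deserves to be made explicit rather than glossed over. In your Step~1 you first observe (correctly) that the elementary symmetric functions of the roots of a degree-$D$ monic factor $g$ lie in $\tfrac1{t'}\mathcal{O}_{\K}$ for ``some power $t'$ of $t$'', and then immediately conclude $T=t\,|\disc(p_\theta)|$. But by the same argument you give, $t^{D-i}c_i\in\mathcal{O}_{\K}$, so the denominator contributed by $t$ is $t^{D}$, not $t$; the assertion $T=t\,|\disc(p_\theta)|$ is only automatic when $D=1$. The paper happens to apply Theorem~\ref{thm:fact} exclusively to the \emph{linear} factors $x-\alpha_i$ of the $f_i/l_i$, so this does not affect anything downstream in the paper, but a proof of the theorem as stated for general $D$ would either need to take $T=t^{D}\,|\disc(p_\theta)|$ or supply an argument that the $t$-denominator does not compound (which your sketch currently lacks). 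Flagging and resolving this is the one non-routine gap in the proposal; the rest is sound and, constants aside, matches what one would extract from \cite{lenstra1983factoring}.
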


Consider an algebraic integer $\tha$ which is
a primitive element of $\Q(\alpha_1,\ldots,\alpha_m)$.
The defining polynomial of each $\alpha_i$
can be factorised over $\Q(\tha)$.
For each $\alpha_i$,
the monic irreducible factor in the form
$x-\tfrac{1}{T}p_i(\tha)$ with $p_i(x)\in \Z[x]$ can be obtained. 
Then, we can further provide the upper bounds for $T$ and the length of each $p_i$.

\begin{lemma}\label{lem:multi_simple_2}
    Let $\alpha_1,\ldots,\alpha_m$ be $m$ algebraic numbers
    with the maximal degree $d$ and the maximal Weil height $h$.
    Then there exists a primitive element $\tha$ of
    $\Q(\alpha_1,\ldots,\alpha_m)$
    such that $\alpha_i=\tfrac{1}{T}p_i(\tha)$ for $i=1,\ldots,m$, where $T$ is a positive integer and each $p_i(x)\in\Z[x]$ is a polynomial
    satisfying
    \[
     T \le \e^{4md^{8m}(\tfrac{2h}{d}+3)}
     \quad\text{and}\quad
     \L(p_i) \le \e^{6md^{8m}(\tfrac{2h}{d}+3)}.
    \]
\end{lemma}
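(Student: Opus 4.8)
Set $\delta=d^{2m}$ and $\zeta=md^{6m}(\tfrac{2h}{d}+3)$, so that (using $md^{8m}(\tfrac{2h}{d}+3)=\delta\zeta$) the claimed bounds read $T\le\e^{4\delta\zeta}$ and $\L(p_i)\le\e^{6\delta\zeta}$. The plan is: (i) upgrade the primitive element of Lemma~\ref{lem:multi_simple} to an \emph{algebraic integer} $\tha$ with controlled degree and height, as required by Lenstra's bound; (ii) factor the defining polynomial of each $\alpha_i$ over $\Q(\tha)$ via Theorem~\ref{thm:fact} and pick out the linear factor $x-\alpha_i$; (iii) feed the degree/height estimates into that bound to control $T$ and $\L(p_i)$. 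For (i): Lemma~\ref{lem:multi_simple} yields a primitive element $\theta$ of $\Q(\alpha_1,\dots,\alpha_m)$ with $\deg\theta\le\delta$ and $\h(\theta)\le md^{2m}(\tfrac{2h}{d}+3)=\zeta/\delta^2$; writing its defining polynomial as $g=c_n x^n+\cdots+c_0\in\Z[x]$, the number $\tha:=c_n\theta$ is an algebraic integer with $\Q(\tha)=\Q(\theta)$, $\deg\tha=\deg\theta\le\delta$, and, by Property~\ref{prop:h} together with $\ln|c_n|\le\ln\H(\theta)\le(\deg\theta)\h(\theta)+(\deg\theta)\ln 2$ (Lemma~\ref{lem:hH}), $\h(\tha)\le(\delta+1)\h(\theta)+\delta\ln 2\le \tfrac{\zeta}{\delta}+\tfrac{\zeta}{\delta^2}+\delta\ln 2$.

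For (ii): since $\alpha_i\in\Q(\tha)$, the monic associate $\tilde f_i:=\lc(f_i)^{-1}f_i$ of the defining polynomial $f_i$ of $\alpha_i$ has $x-\alpha_i$ among its monic irreducible factors over $\Q(\tha)$. Viewing all the $\tilde f_i$ as elements of $\tfrac1t\Z(\tha)[x]$ with the common denominator $t:=\prod_{j=1}^m|\lc(f_j)|$ and applying Theorem~\ref{thm:fact}, we obtain a single integer $T=t\,|\disc(p_\tha)|$ and polynomials $p_i\in\Z[x]$ of degree $<\deg\tha$ with $\alpha_i=\tfrac1T p_i(\tha)$, where $p_\tha$ is the defining polynomial of $\tha$, of degree $\deg\tha\le\delta$ and Weil height $\h(\tha)$.

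For (iii) the two estimates run in parallel. For $T$: $t\le\prod_j\H(f_j)\le\e^{m(dh+d\ln 2)}$ by Lemma~\ref{lem:hH}, and $|\disc(p_\tha)|\le(\deg\tha)^{\deg\tha}\e^{2(\deg\tha)^2\h(\tha)}\le\delta^{\delta}\e^{2\delta^2\h(\tha)}$ by Corollary~\ref{coro:discrF}; substituting the bound on $\h(\tha)$ gives $2\delta^2\h(\tha)\le 2(\delta+1)\zeta+2\delta^3\ln 2$, so that $\ln T\le m(dh+d\ln 2)+\delta\ln\delta+2(\delta+1)\zeta+2\delta^3\ln 2$. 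For $\L(p_i)$: the linear factor $x-\tfrac1T p_i(\tha)$, identified with its $(\deg\tha+1)$-dimensional coefficient vector, has $2$-norm at least $\tfrac1{T\sqrt{\deg\tha}}\L(p_i)$, while Theorem~\ref{thm:fact} with $D=1$ (so $\binom{2D}{D}=2$) bounds that $2$-norm by $\H(\tilde f_i)\bigl(4(\deg f_i+1)(\deg\tha)^3(\deg\tha-1)^{\deg\tha-1}\bigr)^{1/2}\|p_\tha\|_2^{\,2(\deg\tha-1)}|\disc(p_\tha)|^{-1/2}$; combining these, using $T|\disc(p_\tha)|^{-1/2}=t\,|\disc(p_\tha)|^{1/2}$, $\H(\tilde f_i)\le\H(f_i)$, and $\|p_\tha\|_2\le\sqrt{\deg\tha+1}\,\H(\tha)$ with $\ln\H(\tha)\le(\deg\tha)\h(\tha)+(\deg\tha)\ln 2$ (Lemma~\ref{lem:hH} again), the dominant part of $\ln\L(p_i)$ is $\tfrac12\ln|\disc(p_\tha)|+2(\deg\tha-1)\ln\|p_\tha\|_2\le 3\delta^2\h(\tha)+\O(\delta\ln\delta)\le 3(\delta+1)\zeta+3\delta^3\ln 2+\O(\delta\ln\delta)$.

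The main obstacle is purely the bookkeeping of this last step: one must verify that every parasitic factor — the $\delta^{\delta}$ and $(\deg\tha-1)^{\deg\tha-1}$ coming from the discriminant and from Lenstra's bound, the $2^{\deg\tha}$-type factors introduced by Lemma~\ref{lem:hH}, the $\sqrt{\deg\tha}$ lost in converting the $2$-norm into $\L$, and the binomial and degree constants — is absorbed, leaving $\ln T\le 4\delta\zeta$ and $\ln\L(p_i)\le 6\delta\zeta$. This works because $\zeta\ge 3m\delta^3$ and $\zeta\ge 2md^{6m-1}h$, so once $d\ge 2$ (hence $\delta\ge 4$ and $\delta+1\le\tfrac54\delta$) each lower-order summand above is at most a fixed fraction of $\delta\zeta$, while the leading terms $2(\delta+1)\zeta\le\tfrac52\delta\zeta$ and $3(\delta+1)\zeta\le\tfrac{15}4\delta\zeta$ leave exactly the needed room; the identity $md^{8m}(\tfrac{2h}{d}+3)=\delta\zeta$ keeps the exponents in the stated closed form. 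The degenerate case $d=1$ (all $\alpha_i\in\Q$) is immediate: take $\tha=1$, $T=\prod_j b_j$ and $p_i=(T/b_i)a_i$ (a constant) for $\alpha_i=a_i/b_i$ in lowest terms, giving $T\le\e^{mh}$ and $\L(p_i)\le\e^{(m+1)h}$, comfortably below the claimed bounds.
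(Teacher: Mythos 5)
Your proposal follows the paper's own proof essentially step for step: upgrade the primitive element of Lemma~\ref{lem:multi_simple} to the algebraic integer $\tha=l_\theta\theta$, apply Lenstra's factorisation bound (Theorem~\ref{thm:fact}) to the scaled defining polynomials $f_i/\lc(f_i)$ with the common denominator $t=\prod_j|\lc(f_j)|$ and $T=t\,|\disc(p_\tha)|$, then use $T|\disc(p_\tha)|^{-1/2}=t|\disc(p_\tha)|^{1/2}$ together with Corollary~\ref{coro:discrF}, Lemma~\ref{lem:hH}, and the trivial $2$-norm-to-$\L$ conversion to close the estimates. The final bookkeeping is sketched rather than fully written out (the claimed $\mathcal{O}(\delta\ln\delta)$ slack is actually of order $\delta^2$, though still absorbed by $\zeta\ge 3m\delta^3$), but this is the same accounting the paper performs, so the proof is correct and the route identical.
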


\begin{proof}
Let $\theta$ be the primitive element 
of $\Q(\alpha_1,\ldots,\alpha_m)$ constructed
in Section~\ref{sec:prim}.
Note that the defining polynomial $p_\theta$ of $\theta$ 
may not be monic. 
So, we further choose the algebraic integer $\tha=l_\theta \theta$
as a new primitive element,
where $l_\theta$ is the leading coefficient
of $p_\theta$.
Denote by $p_\tha$ its defining polynomial of degree $d_\tha$ and Weil height $h_\tha$.
Then, it is obvious that
\begin{equation}
d_\tha  = d_\theta \le d^{2m},
\end{equation}
By Property~\ref{prop:h}, we can further infer that
\begin{equation}\label{ineq:h_tha}
\begin{aligned}
h_\tha & = \h(l_\theta \theta)\\
& \le 
\ln l_\theta + \h(\theta) \le \ln \H(\theta) + \h(\theta)\\
& \le
d^{2m}(md^{2m}(\tfrac{2h}{d}+3)+\ln2) + md^{2m}(\tfrac{2h}{d}+3)\\
& \le 2md^{4m}(\tfrac{2h}{d}+3).
\qquad\text{($d\ge 2$)}
\end{aligned}
\end{equation}
Note that the upper bound for $\h(\tha)$
is also valid when $d=1$.

Let $f_i$ be the defining polynomial of each $\alpha_i$ with the leading coefficient $l_i$.
We choose $t=\prod_{i=1}^m l_i$ such that
every $f_i / l_i$ is a monic polynomial in $\tfrac{1}{t}\Z(\tha)[x]$.
By Theorem~\ref{thm:fact}, 
all the monic irreducible factors each $f_i / l_i$ are in $\frac{1}{T}\Z(\tha)[x]$ with
the common $T = t|\disc(p_\tha)|$.
Then, by Corollary~\ref{coro:discrF}, we have 
\[
\begin{aligned}
T & \le \prod_{i=1}^m\H(f_i) \cdot |\disc(p_\tha)|\\
& \le
\e^{md(h+\ln 2)} d_\tha^{d_\tha}\e^{2d_\tha(d_\tha-1)h_\tha}
      \\
& \le \e^
{4md^{8m}(\tfrac{2h}{d}+3)
- (4md^{6m}(\tfrac{2h}{d}+3) - 2md^{2m}\ln{d} - md(h+\ln 2) )}.
\end{aligned}
\]

Note that $f_i(\alpha_i)/l_i=0$, which implies $f_i/l_i$ has the monic irreducible factor $x-\frac{1}{T}p_i(\tha)$ of degree as most $d_\tha$.
It is also obvious that $p_i$ contains at most $d_\tha$ terms. Thus,
\[
\begin{aligned}
\L(p_i) & \le {d_\tha}^{\tfrac{1}{2}}  \|p_i\|_2 
 \le {d_\tha}^{\tfrac{1}{2}}   \|x-\frac{1}{T}p_i(\tha)\|_2 T,
\end{aligned}
\]
and by Theorem~\ref{thm:fact}, we have
$\ln \L(p_i)\le \ln L_1 + \ln L_2$ where
\[
\begin{aligned}
\ln L_1 
& = \ln (\|p_\tha\|_2^{2(d_\tha -1)}  |\disc(p_\tha)|^{-\tfrac{1}{2}} T ) \\
& = 
2(d_\tha -1) \ln \|p_\tha\|_2
+ \ln(t|\disc(p_\tha)|^{\tfrac{1}{2}} )\\
& \le 
 6md^{8m}(\tfrac{2h}{d}+3) 
  -(4md^{6m}(\tfrac{2h}{d}+3) - 2(d^{2m}-1)d^{2m}\ln 2),\\
\ln L_2
& = \ln \H(f_i) +
\tfrac{1}{2}\ln (2(d+1)d_\tha^4(d_\tha-1)^{d_\tha-1})+ \tfrac{1}{2}\ln\tbinom{2d_\tha}{d_\tha}\\
& \le \ln \H(f_i) 
+ 2 \ln{d_\tha}
+ \frac{1}{2} \ln (2(d+1)(d_\tha-1)^{d_\tha-1})
+d_{\tha} \ln 2
\\
& \le  d(h+\ln 2) + 4m \ln d 
+ \frac{1}{2}\ln(2d+2) \\
& \qquad + \frac{1}{2}(d^{2m}-1)\ln(d^{2m}-1)
+ d^{2m} \ln 2.
\end{aligned}
\]
Then, the result follows by combining $\ln L_1$ and $\ln L_2$.
\end{proof}


\section{Proof of Main Result A}
For the linear forms $\lambda$,
we intend to rewrite it into the form as $P(\e^{\alpha_1'},\ldots,\e^{\alpha_{M}'})$,
where $P[x_1,\ldots,x_{M}]$ is a multivariate polynomial over
$\Q(\beta_1,\ldots,\beta_m)$,
and the new exponents ${\alpha_1'},\ldots,{\alpha_{M}'}$ are
$\Q$-linear independent.
Then, the effective Lindemann--Weierstrass theorem can be  applied to analyse the lower bound for $|\lambda|$.

\subsection{Constructing the Exponential Polynomial}
Following Section~\ref{sect:fact},
the algebraic integer $\tha$ is a primitive element 
of $\Q(\alpha_1,\ldots,\alpha_m)$,
and the defining polynomials of
$\alpha_1,\ldots,\alpha_m$ can be factorised into monic irreducible factors in $\frac{1}{T}\Z(\tha)[x]$.
As mentioned previously, for each exponent $\alpha_i$, there exists a polynomial $p_i\in\Z[x]$ 
such that $\alpha_i=\tfrac{1}{T}p_i(\tha)$.
Namely, $\frac{1}{T},\frac{1}{T}\tha,\ldots,\frac{1}{T}\tha^{d_\tha-1}$ form a basis, by which every $\alpha_i$ can be $\Z$-linearly expressed.

Without loss of generality, for $i=1,\ldots,m$, we write 
\[
p_i(x)=c_{i,0}+c_{i,1}x+\cdots\linebreak[0]+c_{i,d_\tha-1}x^{d_\tha-1},\]
by which each $\e^{\alpha_i}=\e^{\tfrac{1}{T}p_i(\tha)}$ can be expanded as
\[
\e^{\alpha_i}=\e^{\tfrac{1}{T} c_{i,0}} \e^{\tfrac{1}{T} c_{i,1} \tha} \cdots
\e^{\tfrac{1}{T} c_{i,d_\tha-1} \tha^{d_\tha-1}}.
\]
Correspondingly, we further define the multivariate
polynomial
\[
P_i(x_0,x_1,\ldots,x_{d_\tha-1})=x_0^{c_{i,0}}x_1^{c_{i,1}}\cdots x_{d_\tha-1}^{c_{i,d_\tha-1}},
\]
such that
$\e^{\alpha_i}=P_i(\e^{\tfrac{1}{T}},\e^{\tfrac{1}{T}\tha},\ldots,\e^{\tfrac{1}{T}\tha^{d_{\tha}-1}})$.
Then, the linear forms can be expressed as
\[\lambda=\sum_{i=1}^m\beta_i P_i\Big(\e^{\tfrac{1}{T}},\linebreak[0]\e^{\tfrac{1}{T}\tha},\ldots,\e^{\tfrac{1}{T}\tha^{d_{\tha}-1}}\Big).
\]
Note that each $P_i$ here is still a Laurent polynomial, which may contain negative exponents caused by the negative coefficients in $p_i$.
To make these exponents positive, we product each $P_i$ by a common monomial with a sufficiently large degree.
Specifically, for $j=0,\ldots,d_\tha-1$,
denote by $\hat c_j$ the maximal of the absolute values of
negative coefficients of $x^j$ in $p_1,\ldots,p_m$, viz. $\hat c_j = \max_{i=1}^{m} |c_{i,j}| [c_{i,j}<0]$ where the brackets denote the indicator function. 
Then, we write
\begin{equation}\label{eq:polyP}
C(x_0,x_1,\ldots,x_{d_\tha-1})=\prod_{j=0}^{d_\tha-1} x_j^{ \hat c_j} \quad\text{and}\quad
P=C\sum_{i=1}^m \beta_i P_i.
\end{equation}
Now, $P$ is a multivariate polynomial in $\Z[x_0,x_1\ldots,x_{d_\tha-1}]$,
and $\lambda$ can be represented by the production
of $\chi'$ and $\chi''$ where
\[
\begin{aligned}
\chi' & = C^{-1}\Big(\e^{\tfrac{1}{T}},\e^{\tfrac{1}{T}\tha},\linebreak[0]\ldots,\e^{\tfrac{1}{T}\tha^{d_{\tha}-1}}\Big),\\
\chi'' & = P \Big(\e^{\tfrac{1}{T}},\e^{\tfrac{1}{T}\tha},\ldots,\e^{\tfrac{1}{T}\tha^{d_{\tha}-1}}\Big).
\end{aligned}
\]
In what follows, we will give the lower bounds for $\chi'$ and $\chi''$ and combine them together to deduce the result.

\subsection{Analysing the Bounds}
{\bf Lower Bound for $\chi'$:} 
Noting that $C(x_0,\ldots,x_{d_\tha-1})$ is a monomial,
we can analyse the lower bound for $\chi'$ in a straightforward way.
Noting the degree of $C$ in each $x_j$ comes from the absolute value of a certain coefficient in $p_i$,
we have
\[
\deg_{x_j}(C) = \hat c_j  \le \max_i \L(p_i).
\]
Moreover, by Corollary~\ref{coro:broot}, we bound the absolute value of $\tha$ as
\[
|\tha| \le \e^{\deg(\tha)\h(\tha)}
\le\e^{ 2md^{6m}(\tfrac{2h}{d}+3)}.
\]
Then, with the fact that $T \ge 1$, we have
\[
\begin{aligned}
  \ln \, |\chi'|  & =  
  \ln \, \Big| C^{-1}\Big(\e^{\tfrac{1}{T}},\e^{\tfrac{1}{T}\tha},\ldots,\e^{\tfrac{1}{T}\tha^{d_{\tha}-1}}\Big) \Big| \\
  & \ge  -\ln \prod_{j=0}^{d_\tha-1} \e^{ \tfrac{\hat c_j}{T}|\tha|^j} 
  =   - \sum_{j=0}^{d_\tha-1}  \tfrac{\hat c_j}{T}|\tha|^j \\
  & \ge -  \max_{i} \L(p_i)
  \cdot \sum_{j=0}^{d_\tha-1} |\tha|^j. 
\end{aligned}
\]
For the case that $|\tha| \ge 2$, we have
\[
\sum_{j=0}^{d_\tha-1} |\tha|^j \le |\tha|^{d_\tha} \le \e^{2md^{8m}(\tfrac{2h}{d}+3)}.
\]
For the case that $|\tha|< 2$, it is obviously that $\sum_{j=0}^{d_\tha-1} |\tha|^j < 2^{d_\tha}$, that means the upper bound above is still valid. 
Then, with the bound for $\L(p_i)$ from Lemma~\ref{lem:multi_simple_2}, we immediately have
\begin{equation}\label{eq:ld1}
\ln |\chi'| \ge - \e^{8md^{8m}(\tfrac{2h}{d}+3)}.
\end{equation}


{\bf Lower Bound for $\chi''$:} 
Recall that $\chi''$ is in the form of exponential polynomial with $\Q$-linearly independent exponents. So, the effective Lindemann--Weierstrass theorem can be applied
to analyse its lower bound.
To be self-contained, we present this theorem as below (with slight adjustment on the notations).
\begin{theorem}[Sert, Theorem~3 of \cite{sert1999version}]\label{thm:EffLW}
Let $\alpha_1,\ldots,\alpha_M \in \K$ be linear independent over $\Q$, and  $\alpha=(\alpha_1,\ldots,\alpha_M)$.
Let $P\in\K[X_1,\ldots,X_M]$ be a nonzero polynomial of degree $\le d_P$ ($d_P\ge 1$) with the coefficients $\beta=(\beta_1,\ldots,\beta_{m})$.
    Then
    \[
    \begin{aligned}
    & \ln  \left| P(\e^{\alpha_1},\ldots,\e^{\alpha_M}) \right| \\
    & \ge -r{d_P}^M\big( \h(\beta) + \tfrac{39}{328D}\ln|\Delta_\beta|+\e^{r'{d_P}^M
      + r''{d_P}^M\ln {d_P} + 72 \hat\alpha} \big) ,
    \end{aligned}
    \]
    in which $D$ is the degree of $\K$ over $\Q$,
    $\Delta_\beta$ is the discriminant of $\Q(\beta_1,\ldots,\beta_M)$, and
    \begin{itemize}
    \item[$\cdot$] $\hat \alpha = \max\{1, \max_i |\alpha_i|\}$,
    \item[$\cdot$] $r = 41  \cdot 2^{-M+1}  3^{2M} M^M D^{M+1}$,
    \item[$\cdot$] 
    $r' =   2^{-M+2}  3^{2M+1} M^M D^{M+1}\\
     {\qquad +\, (1+6D)2^{-M+4}3^{2M} M^M D^M \ln (9MD)}\\
     {\qquad +\, 2^{-M+4}3^{2M} M^M D^{M+1} (1+6M) \h(\alpha)}$,
    \item[$\cdot$] $r''=(1+6D) 2^{-M+4} 3^{2M} M^M D^{M}$.
    \end{itemize}
\end{theorem}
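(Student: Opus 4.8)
Since this is Sert's theorem~\cite{sert1999version}, the plan is to reconstruct the skeleton of his argument, which is an instance of Laurent's method of \emph{interpolation determinants}; I will indicate only the structure and flag where each explicit constant is produced, referring to~\cite{sert1999version} for the full details. Arguing by contradiction, suppose $\epsilon:=|P(\e^{\alpha_1},\ldots,\e^{\alpha_M})|$ is strictly smaller than the asserted bound. The first step is to pass to the auxiliary entire function $F(z)=P(\e^{\alpha_1 z},\ldots,\e^{\alpha_M z})$ of one complex variable, so that $F(1)=P(\e^{\alpha_1},\ldots,\e^{\alpha_M})$; the hypothesis that $\alpha_1,\ldots,\alpha_M$ are $\Q$-linearly independent guarantees that the exponential monomials $z\mapsto\e^{\langle\ell,\alpha\rangle z}$ occurring in $F$ are pairwise distinct functions, which is what keeps the interpolation data below nondegenerate.

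Next I would fix integer parameters $L$ (bounding the polynomial degrees and the number of monomials $\e^{\langle\ell,\alpha\rangle z}$, $|\ell|\le d_P$, that are used) and $S$ (the number of evaluation points $1,2,\ldots,S$), both to be optimised at the very end, and assemble a square matrix $\mathcal{M}$ of the shape
\[
\mathcal{M}=\Big(\,\partial_z^{\tau}\big(z^{k}\e^{\langle\ell,\alpha\rangle z}\big)\big|_{z=s}\,\Big),
\]
with rows indexed by admissible pairs $(k,\ell)$ together with $s\in\{1,\ldots,S\}$ and columns by the derivative orders $\tau$, the combinatorial data arranged so that $\mathcal{M}$ is square; write $\nabla=\det\mathcal{M}$. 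The smallness of $\epsilon$, injected through the identity $F(1)=\epsilon$, forces the interpolation matrix to be \emph{almost degenerate}, so that $\nabla$ --- viewed, after inserting an auxiliary scaling variable, as the value of an entire function vanishing to high order at the origin --- is analytically small; a Schwarz-lemma / maximum-modulus estimate then yields an upper bound of the shape
\[
\ln|\nabla|\le -c_1\,(\text{forced order of vanishing})+c_2\,LS\,(\ln L+\hat\alpha),
\]
the $\hat\alpha$-term being the one that propagates into the summand $72\hat\alpha$ inside the exponential.

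The complementary ingredient is an arithmetic lower bound for $|\nabla|$. Every entry of $\mathcal{M}$ is a polynomial with rational coefficients in the $\alpha_i$, the $\e^{\alpha_i}$ and the coefficients $\beta_j$, so $\nabla$ is an algebraic number lying in a bounded-degree extension of $\K$; after clearing a common denominator it becomes an algebraic integer whose house is controlled by $L$, $S$, $\h(\alpha)$ and $\h(\beta)$ and whose field of definition has discriminant bounded in terms of $\Delta_\beta$ and $D$. A Liouville-type inequality (equivalently, the inequality $\h(\gamma)\ge 0$ valid for every nonzero algebraic number $\gamma$) then gives
\[
\ln|\nabla|\ge -c_3\,\big(\,\h(\beta)+\tfrac{1}{D}\ln|\Delta_\beta|+LS\,\h(\alpha)+\cdots\,\big),
\]
which is exactly where the term $\tfrac{39}{328D}\ln|\Delta_\beta|$ and the factor $r$ originate; this step is legitimate only once $\nabla\neq 0$ has been established.

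Comparing the analytic upper bound with the arithmetic lower bound produces a contradiction as soon as $L$ and $S$ are chosen so that $c_1$ times the order of vanishing outweighs the size terms; the inequality that survives the optimisation is precisely the claimed lower bound for $\ln\epsilon$, and propagating every constant through Siegel's lemma, the Schwarz estimate and the Liouville inequality yields the explicit values of $r$, $r'$ and $r''$. I expect the genuine obstacle to be twofold: first, proving $\nabla\neq 0$ by an effective, parameter-uniform rank argument --- this is where the $\Q$-linear independence of the $\alpha_i$ does real work and is the combinatorial heart of the method; and second, carrying out the parameter optimisation precisely enough that the exponential term $\e^{r'{d_P}^M+r''{d_P}^M\ln{d_P}+72\hat\alpha}$ emerges with the stated coefficients rather than merely up to an unspecified absolute constant.
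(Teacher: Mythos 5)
The paper does not prove this statement at all: Theorem~\ref{thm:EffLW} is imported verbatim (up to notation) from Sert, Theorem~3 of \cite{sert1999version}, and is used as a black box in the proof of Main Result~A. So the only faithful ``proof'' within the scope of this paper is the citation itself. Your reconstruction correctly identifies the engine of Sert's argument --- Laurent's method of interpolation determinants, which is indeed what the title of \cite{sert1999version} announces --- and the division of labour you describe (an analytic upper bound for the determinant via a Schwarz-lemma estimate, an arithmetic lower bound via a Liouville-type inequality over the field generated by the $\alpha_i$, $\e^{\alpha_i}$ and $\beta_j$, and a final optimisation of the parameters) is the right skeleton.

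However, as a proof your proposal has genuine gaps, and you name them yourself: (i) the nonvanishing of the interpolation determinant $\nabla$, which is the zero estimate where the $\Q$-linear independence of $\alpha_1,\ldots,\alpha_M$ does all the work and without which the Liouville step is vacuous; and (ii) the propagation of explicit constants through every stage so that the quantities $r$, $r'$, $r''$, the coefficient $\tfrac{39}{328D}$ on $\ln|\Delta_\beta|$, and the summand $72\hat\alpha$ come out exactly as stated rather than up to an unspecified absolute constant. Since the entire point of this theorem in the present paper is that the constants are explicit, gap (ii) is not a cosmetic omission --- an outline that only establishes the bound ``up to computable constants'' would not suffice for Main Result~A. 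The correct resolution here is not to fill these gaps but to cite \cite{sert1999version}, as the paper does; if you do want a self-contained argument, both the effective zero estimate and the bookkeeping of constants must be carried out in full, and each is a substantial piece of work in Sert's paper.
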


Then, according the bounds in Lemma~\ref{lem:multi_simple} and~\ref{thm:fact}, we give the explicit upper bounds
related to the following objects, that appear in 
effective Lindemann--Weierstrass theorem as parameters.

\noindent $\bullet\,$ For the algebraic extensions:\\
Let $\K$ be the algebraic extension of $\Q$
generated by the exponents
$\frac{1}{T},\frac{1}{T}\tha,\frac{1}{T}\tha^2,\ldots,\frac{1}{T}\tha^{d_\tha-1}$
and all the coefficients in $P$,
which implies $\K \subseteq \Q(\alpha_1,\ldots,\alpha_m,\beta_1,\ldots,\beta_m)$. 
By Lemma~\ref{lem:multi_simple}, we have
\[
D = [\K:\Q]\le d^{4m}.
\]
Furthermore, let $\vartheta'$ (of degree $d_\vartheta'$ and Weil height $h_\vartheta'$) be the primitive element of $\Q(\beta_1,\ldots,\beta_m)$.
Then, the discriminant of $\Q(\vartheta')$ is equal to the discriminant of the defining polynomial of $\vartheta'$. 
By Corollary~\ref{coro:discrF} and Lemma~\ref{lem:multi_simple}, we have
\[
\begin{aligned}
\ln |\Delta_\beta|& = \ln |\disc(\Q(\vartheta'))| \\
& \le  d_{\vartheta'} \ln d_{\vartheta'} + 2d_{\vartheta'}(d_{\vartheta'}-1)h_{\vartheta'}\\
& \le 2md^{2m}\ln d + 2d^{2m}(d^{2m}-1)md^{2m}(\tfrac{2h}{d}+3)\\
& \le 2md^{6m}(\tfrac{2h}{d}+3).
\end{aligned}
\]

\noindent $\bullet\,$  For the polynomial:\\
Let P be the polynomial we constructed in Equation~\eqref{eq:polyP}.
Denote by $M$ the number of variables of $P$, and we have
\[
M\le d_\tha \le d^{2m}.
\]
Recall the form of $P$, in which each $c_{i,j}$ is the coefficient of $x^j$ in $p_i(x)$.
Then, we can define 
$
i^*=\arg\max_i \sum_j c_{i,j}
$,
which implies $P_{i^*}$ has the maximal degree
among $P_1,\ldots,P_m$.
Thus, with the fact that $C$ and each $P_i$ are monomials, we have
\[
\begin{aligned}
d_P 
& \le \deg(C) + \deg(P_{i^*}) \\
& = \sum_{j=0}^{d_\tha-1} \hat c_j + \sum_{j=0}^{d_\tha-1} c_{i^*,j}\\
& = \sum_{j=0}^{d_\tha-1} ( \hat c_j + c_{i^*,j} ) [c_{i^*,j}<0] + \sum_{j=0}^{d_\tha-1} c_{i^*,j}[c_{i^*,j}>0]\\
& \le m \max_i\L(p_{i}). \\
\end{aligned}
\]
The last inequality above holds
because every $\hat c_j$ from $p_{i^*}$ will be eliminated by the corresponding $c_{i^*,j}$, that implies
\[
\sum_{j=0}^{d_\tha-1} ( \hat c_j + c_{i^*,j} ) [c_{i^*,j}<0] \le \sum_{i\neq i^*} \L(p_{i}).
\]
Thus, by Lemma~\ref{lem:multi_simple_2}, we have
\[
d_P \le m \e^{6md^{8m}(\tfrac{2h}{d}+3)}.
\]
Note that the coefficients in $P$
are exactly the coefficients $\beta_1,\ldots,\beta_m$
appearing in the linear forms $\lambda$.
So, by Lemma~\ref{lem:hw_vec}, we have
\[
\h(\beta) \le \sum_{i=1}^m \h(\beta_i) 
\le m h.
\]

\noindent $\bullet\,$ For the exponents:\\
Let $\alpha = (\tfrac{1}{T}, \tfrac{1}{T}\tha,\ldots,\tfrac{1}{T}\tha^{d_\tha-1})$.
For the case $|\tha| \ge 1$, we have
\[
\max_i |\alpha_i|  \le |\tha|^{d_\tha}
\le \e^{2md^{8m}(\tfrac{2h}{d}+3)}.
\]
Obviously, the bound above also holds when $|\tha|<1$.
Moreover, according to Property~\ref{prop:h}, we have
\[
\h(\tfrac{1}{T}\tha^i)\le \h(T) + i\h(\tha) = \ln T + ih_\tha.
\]
Then, by combining this with Lemma~\ref{lem:hw_vec}, we have
\[
 \begin{aligned}
\h(\alpha) & \le \sum_{i=0}^{d_\tha-1} \h(\tfrac{1}{T}\tha^i)
\le 
d_\tha \ln T + \frac{d_\tha(d_\tha-1)}{2} h_\tha \\
& \le
4md^{10m}(\tfrac{2h}{d}+3)
+ d^{2m}(d^{2m}-1) md^{4m}(\tfrac{2h}{d}+3)\\
& \le 5 md^{10m}(\tfrac{2h}{d}+3).
\end{aligned}
\]
Finally, for simplicity, we write
\[
\delta = d^{2m} \quad\text{and}\quad \zeta = m d^{6m(\tfrac{2h}{d}+3)},
\]
by which these bounds obtained can be rewritten as
\[
\begin{array}{llll}
  D\le \delta^2,        &  \quad  M \le \delta, & \quad d_P\le m \e^{6\delta\zeta},      &  \quad    \hat{\alpha} \le \e^{2\delta\zeta},   \\
  \h(\alpha) \le 5\delta^2\zeta,   &  \quad  \h(\beta) \le m h, & \quad
  \ln|\Delta_\beta|\le 2\zeta.     & 
\end{array}   
\]
Then, by substituting all these bounds into Theorem~\ref{thm:EffLW}, we have 
\[
\begin{aligned}
\ln\, |\chi''|
\ge - r m^{\delta}\e^{6\delta^2\zeta}
\big( 
m h + \tfrac{39}{164}\zeta  + \e^{R}\big), 
\end{aligned}
\]
in which
\[
 \begin{aligned}
& R=r' m^{\delta}\e^{6\delta^2\zeta} + r'' m^{\delta}\e^{6\delta^2\zeta} (\ln m + 6\delta\zeta) + 72 \e^{2\delta\zeta} ,\\
& r =  82(\tfrac{9}{2})^\delta \delta^{3\delta +2},\\
& r' =  12 (\tfrac{9}{2})^\delta \delta^{3\delta +2}
+ 16(1+6 \delta^2)(\tfrac{9}{2})^\delta \delta^{3\delta} \ln(9\delta^3) \\
& \qquad + 80(\tfrac{9}{2})^\delta \delta^{3\delta +4}
(1+6\delta) \zeta,\\
& r''=  (1+6\delta^2)16(\tfrac{9}{2})^\delta \delta^{3\delta}.
 \end{aligned}
\]
Combining the lower bound  $\ln \, |\chi'| \ge -\e^{8\delta\zeta}$
in Inequality~\eqref{eq:ld1},
we immediately complete the proof of Main Result A.

\section{Proof of Main Result B}
In this section, for algebraic numbers $\alpha_i$ and $\beta_i$ of bounded degrees and heights,
we establish a nontrivial upper bound that
there exists a nonzero $\lambda=\beta_1\e^{\alpha_1}+\cdots+\beta_m\e^{\alpha_m}$ whose absolute value is bounded from above by it.
We construct such $\lambda$ by the difference between 
two distinct linear forms, then the upper bound can be given via Dirichlet's pigeonhole principle.
\subsection{Constructing Linear Forms}
Denote by $\A_d(H)$ the set of
algebraic numbers
having degree $d$ over $\Q$ and the absolute multiplicative height at most $H$.
Now, we consider the set $\Lambda$ which consists of all the linear forms $\sum_i\beta_i\e^{\alpha_i}$ satisfying the following conditions:
\begin{itemize}
    \item[$\cdot$] the linear form contains at most $\ell$ terms,
    \item[$\cdot$] any $\alpha_i$ and $\beta_i$ come from
    $\A_{d_1} (H_1)$ and $\A_{d_2} (H_2)$, respectively,
    \item[$\cdot$] the absolute values of $\alpha_i$ and $\beta_i$ are less than or equal to $1$.
\end{itemize}
Let $\A'_d(H)$ be the set of algebraic numbers with 
degree $d$, the maximal absolute multiplicative height $H$,
and the maximal absolute value $1$.
Then, we can count the number of the elements in $\Lambda$.

Consider the linear forms contain exactly $k$ terms.
Noting that the exponents $\alpha_1,\ldots,\alpha_k$
in the linear forms are distinct, 
they can be chosen by $k$-combinations of the elements in $\A'_{d_1}(H_1)$.
The coefficient of each $\e^{\alpha_i}$ can be chosen
as any nonzero element in $\A'_{d_2}(H_2)$.
So we derive that
\begin{equation}\label{eq:numLmd}
|\Lambda|= \sum_{k=1}^{\ell} N_2 ^k \binom{N_1}{k},
\end{equation}
in which $N_1=|\A'_{d_1} (H_1)|$
and $N_2=|\A'_{d_2} (H_2)\setminus\{0\}|$.
Then, with the assumption that $N_1 \ge \ell+1$ and $N_2 \ge 1$, we have
\begin{equation}\label{eq:boundL}
\begin{aligned}
|\Lambda|
& \ge \sum_{k=1}^{\ell} N_2 ^k \left(\frac{N_1}{k}\right)^k 
\ge 
 \sum_{k=1}^{\ell}  N_2 ^k \left(\frac{N_1}{{\ell}}\right)^k  \\
& =
\left(\frac{N_1N_2}{\ell} \right)^\ell
\cdot \frac{1-(\tfrac{{\ell}}{N_1N_2})^{\ell}}{1-\tfrac{{\ell}}{N_1N_2}} \\
& > 
\left(\frac{N_1N_2}{\ell} \right)^\ell.\\
\end{aligned}
\end{equation}

In order to apply the pigeonhole principle, we should ensure that for any
$\lambda_1,\lambda_2\in\Lambda$,
the difference $\lambda_1 - \lambda_2$ is a linear form satisfying the conditions:
(i) it contains at most $m$ terms, and
(ii) its exponents and coefficients are of degrees $\le d$ and absolute multiplicative heights $\le H$. 
Constructively, we can choose the setting as follows:
\begin{itemize}
    \item[$\cdot$] $\ell = \lfloor \tfrac{m}{2}\rfloor$, since $\lambda_1-\lambda_2$ has at most $2\ell$ term,
    \item[$\cdot$] $d_1 = d$ and $H_1 = H$, since all the exponents in $\lambda_1-\lambda_2$ directly come from $\lambda_1$ and $\lambda_2$,
    \item[$\cdot$]  $d_2=\lfloor \sqrt{d} \rfloor$ and $H_2 =  \sqrt{\tfrac{H}{2}}$, 
    since the coefficient in $\lambda_1-\lambda_2$ may come from the difference between two terms with the same $\e^{\alpha_i}$.
\end{itemize}


\subsection{Analysing the Bound}
For each linear form $\lambda=\sum_i\beta_i\e^{\alpha_i}$ in $\Lambda$,
we can bound its absolute value as
\[
|\lambda| \le \sum_{i=1}^{\ell} |\beta_i| \e^ {|\alpha_i|} \le  \e\ell \le  \frac{1}{2}\e m,
\]
because $\lambda$ contains at most $\ell\le\tfrac{m}{2}$ terms
and the absolute values of $\alpha_i$ and $\beta_i$
are at most $1$.
Thus, we can define
\[
\Omega=\Big \{x+y\i\in\C \mid x,y\in [-\tfrac{1}{2}\e m, \tfrac{1}{2}\e m] \Big\}
\]
such that $\Lambda \subset \Omega$.
Note that $\Omega$ indicates the square
in the complex plane, which is centered at the origin and has the length $\e m$.
We further split $\Omega$ into $t \times t$ grid such that each cell is a square with the same length $\e m {t^{-1}}$.
Then, it is obvious that each $\lambda\in\Lambda$ will be contained by
one of these cells (see Figure~\ref{fig:pig}).

\begin{figure}[ht]
    \centering
    \includegraphics[width=0.38\textwidth]{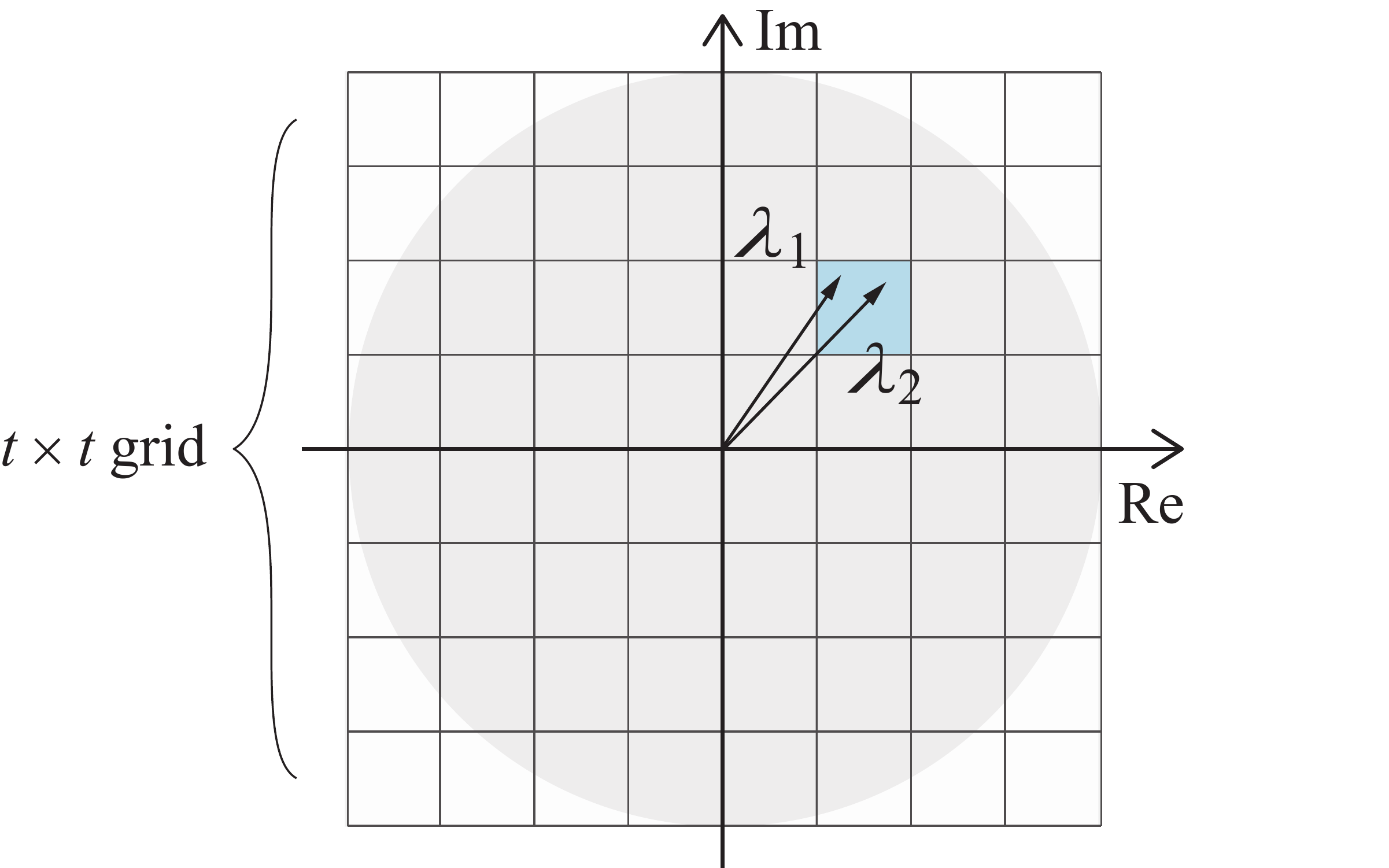}
    \caption{Each element of $\lambda$ is contained by the circle centered at the origin and with radius $\e\ell$. When $|\Lambda|\ge t^2+1$, two distinct elements of $\Lambda$ will be contained by the same cell.}
    \label{fig:pig}
\end{figure}

By Dirichlet's pigeonhole principle, if $|\Lambda|\ge t^2 +1$ holds,
then we have that there exists at least one cell containing more than one element of $\Lambda$. In other words,
there exist $\lambda_1,\lambda_2\in\Lambda$ such that
\begin{equation}\label{eq:elt}
|\lambda_1 - \lambda_2| \le \frac{\sqrt2 \e m}{t}.
\end{equation}
Thus, by Inequality~\eqref{eq:boundL}, let
\[ t= \left\lfloor \sqrt{\left(\frac{N_1N_2}{\ell} \right)^\ell} \right\rfloor
\]
such that $t^2$ is a positive integer less than $|\Lambda|$.

Note the fact that $\ln x-\ln \lfloor x \rfloor < 1$ for any $x\ge 1$.
Then, by Inequality~\eqref{eq:elt}, we immediately obtain the upper bound as
\[
\begin{aligned}
\ln|\lambda_1-\lambda_2|
& \le  - \ln t + \ln (\sqrt 2 \e m) \\
& \le - \ln \sqrt{\left(\frac{N_1N_2}{\ell} \right)^\ell} + \ln (\sqrt 2 \e  m) + 1\\
& \le -\frac{1}{2}\ell\ln \frac{N_1N_2}{\ell}
+\ln{m} + \ln {\sqrt{2}} + 2.
\end{aligned}
\]
To make this upper bound explicit,
the remaining issue is to provide the lower bounds for the numbers of elements in $\A'_{d_1} (H_1)$ and $\A'_{d_2} (H_2)$, respectively.

As a case of one-dimensional of
Northcott's theorem~\cite{northcott1949inequality},
it is well known that $\A_d(H)$
contains finitely many elements,
and counting these elements is also an
important problem in number theory.
In 1993, Schmidt~\cite{schmidt1993northcott} first proved the
explicit lower bound for the number of the elements in $\A_d(H)$.

\begin{lemma}[Schmidt, Theorem of \cite{schmidt1993northcott}]\label{lem:numalg}
Let $\A_d(H)$ be the set of algebraic numbers with
degree $d$ and the maximal absolute multiplicative height $H$.
Then
\[
|\A_d(H)| > 6^{-d(d+1)}H^{d(d+1)} \quad \text{when}\;  H^d \ge 2.
\]
\end{lemma}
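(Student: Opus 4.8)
The plan is to turn the count of algebraic numbers into a count of integer polynomials and then to exhibit enough of those explicitly. By Lemma~\ref{lem:hmah}, an algebraic number $\alpha$ of degree $d$ has absolute multiplicative height at most $H$ exactly when $\M(\alpha)\le H^{d}$. A root $\alpha$ of $f\in\Z[x]$ with $\deg f=d$ has degree exactly $d$ over $\Q$ if and only if $f$ is a constant times an irreducible polynomial, and then the $d$ roots of $f$ are pairwise distinct (characteristic zero), each of degree $d$ and of Mahler measure $\M(f)$; normalising $f$ to be primitive with positive leading coefficient makes it unique, and distinct such $f$ have disjoint root-sets. Hence
\[
|\A_{d}(H)|=d\cdot\#\bigl\{f\in\Z[x]:\ \deg f=d,\ \lc(f)\ge1,\ f\ \text{primitive and irreducible},\ \M(f)\le H^{d}\bigr\},
\]
and it suffices to bound this cardinality below by $\tfrac1d\,6^{-d(d+1)}H^{d(d+1)}$.

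First I would dispose of the range where the bound is vacuous. Since $6^{-d(d+1)}H^{d(d+1)}=(H/6)^{d(d+1)}$, the right-hand side is $<1$ whenever $H<6$, so there it is enough that $\A_{d}(H)$ be nonempty; and when $H^{d}\ge2$ the polynomial $x^{d}-2$ is Eisenstein at $2$, hence irreducible of degree $d$, with $\M(x^{d}-2)=2\le H^{d}$, so its roots lie in $\A_{d}(H)$. From now on I assume $H\ge6$.

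The core is a lattice-point count in the polytope $\{(a_{0},\ldots,a_{d})\in\R^{d+1}:\sum_{i}|a_{i}|\le H^{d}\}$. Every integer point of it with $a_{d}\ge1$ is admissible, since $\M(f)\le\L(f)\le H^{d}$~\cite{mahler1960application}, and the number of such points is $(1-o(1))\,\tfrac{2^{d}}{(d+1)!}\,H^{d(d+1)}$. From these I would remove the non-primitive ones --- those whose coefficients share a prime factor $p$, which make up at most a fraction $\sum_{p}p^{-(d+1)}+o(1)<1$ of the total --- and the reducible ones. For a reducible $f=gh$ with $\deg g=k\le d/2$, I fix $g$ and observe that the admissible $f$ divisible by $g$ are the lattice points of the rank-$(d-k+1)$ sublattice $g\cdot\Z[x]_{\le d-k}$ lying in the polytope; this sublattice has covolume (in its span) at least $\M(g)^{d-k+1}$ --- a Toeplitz/autocorrelation determinant identity combined with Jensen's formula, the determinant equalling $\sum_{j=0}^{d}p^{2j}q^{2(d-j)}$ in the linear case $g=qx-p$ --- so the number of such $f$ is $\asymp(2H^{d})^{d-k+1}\M(g)^{-(d-k+1)}$. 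Summing over $g$ of degree $k$, organised dyadically in $\M(g)$ and truncated by $\M(g)\le\M(f)\le H^{d}$, bounds the reducibles by $\O_{d}\!\bigl(H^{d^{2}}\ln H\bigr)$, smaller than the admissible count by a factor $\asymp H^{d}\ge6^{d}$; hence, since $H\ge6$, the reducibles are a vanishing fraction. Combining the three estimates, the cardinality above is at least $\tfrac{2^{d-2}}{(d+1)!}\,H^{d(d+1)}$, which exceeds $\tfrac1d\,6^{-d(d+1)}H^{d(d+1)}$ because $6^{d(d+1)}$ dwarfs $(d+1)!$ for every $d\ge1$; multiplying by $d$ gives the theorem.

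The main obstacle is keeping the count of reducible polynomials honest uniformly in $d$. A naive bound on the coefficients of a divisor --- via Mignotte's inequality alone --- loses a constant of size $\exp(\Theta(d^{2}))$, so the covolume lower bound $\M(g)^{d-k+1}$, which encodes that divisibility by a polynomial of large Mahler measure is rare, is the indispensable input. Once it is in place, the genuinely reducible polynomials are confined to $H$-exponent at most $d^{2}$ against $d^{2}+d$ for the full polytope, and the surplus power $H^{d}\ge6^{d}$ --- together with the very generous factor $6^{-d(d+1)}$ in the target --- absorbs the remaining $d$-dependent losses (the case $d=1$, where reducibility is vacuous and only the bounded primitivity loss matters, is immediate).
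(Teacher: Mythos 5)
The paper does not prove this lemma at all --- it is quoted verbatim from Schmidt~\cite{schmidt1993northcott} and used as a black box --- so there is no internal proof to compare against. What you have written is a sketch of a proof of Schmidt's theorem itself, and while it has the right shape (reduce to counting primitive irreducible integer polynomials of degree $d$ with $\M(f)\le H^d$, estimate the full lattice-point count, sieve out the imprimitive and reducible ones), two steps are genuinely incomplete.

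First, the covolume bound. You assert $\mathrm{covol}(g\cdot\Z[x]_{\le d-k})\ge\M(g)^{d-k+1}$ and verify it only for linear $g$. This is the crux of your reducible sieve, and in general it is a nontrivial fact about banded Toeplitz determinants: one needs that the principal minors $D_n$ of the positive-definite Toeplitz matrix with symbol $|g(e^{i\theta})|^2$ satisfy $D_n/D_{n-1}\downarrow\exp\bigl(\frac{1}{2\pi}\int\ln|g|^2\bigr)=\M(g)^2$ (the Fej\'er/Szeg\H{o} monotonicity from the theory of orthogonal polynomials on the circle), combined with $D_0=\|g\|_2^2\ge\M(g)^2$. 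This should be stated and cited, not just gestured at via ``a Toeplitz/autocorrelation determinant identity combined with Jensen's formula.'' Second, and more seriously, every quantitative step is left as $o(1)$, $\asymp$, or $\O_d$, and your closing claim that the slack $6^{-d(d+1)}$ absorbs these ``$d$-dependent losses'' is not demonstrated. The tension is real: your reducible count is $\O_d(H^{d^2}\ln H)$ against a main term $\sim 2^{d+1}H^{d(d+1)}/(d+1)!$, so the reducible-to-total ratio scales like $(d+1)!\ln H/H^d$, which is \emph{not} small at $H=6$ once $d$ is moderately large (already $(d+1)!>6^d$ for $d\ge 15$ or so). Your argument therefore does not, as written, deliver a bound valid uniformly in $d$ down to $H=6$, which is exactly the regime the lemma needs after your (correct) reduction of the case $H<6$ to nonemptiness via $x^d-2$.

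Given how generous the constant $6^{-d(d+1)}$ is, you can bypass the sieve entirely: for $d\ge 2$ count polynomials $a_dx^d+\cdots+a_0$ with $|a_i|\le\lfloor H^d/(d+1)\rfloor$, $a_d\ge 1$ odd, $a_1,\ldots,a_{d-1}$ even, and $a_0\equiv 2\pmod 4$; these are Eisenstein at $2$, hence automatically primitive and irreducible, and an elementary count of the box already exceeds $\frac1d(H/6)^{d(d+1)}$. The case $d=1$ (no irreducibility needed, only coprimality) is handled by a one-line sieve over primes. This avoids both the Toeplitz lemma and all the implicit constants. Your approach is essentially what one would do to prove Schmidt's theorem with the \emph{sharp} $d$-dependence, which is far more than this lemma requires.
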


In our situation, we focus on the elements in $\A_d(H)$ whose
absolute values are less than or equal to $1$.
Consider any nonzero algebraic number $\alpha\in\A_d(H)$ with $|\alpha|>1$,
and denote by $f(x)$ the defining polynomial of $\alpha$.
It is not hard to check that $\alpha^{-1}$ is a root of the polynomial $g\in\Z[x]$ of degree $d$ where
\[
g(x)=x^d f(\tfrac{1}{x}).
\]
Also, by Property~\ref{prop:h}, we further have
$\h(\alpha^{-1})=\h(\alpha)$,
that implies $\alpha^{-1}$ is also contained in $\A_d(H)$.
With the fact that $|\alpha^{-1}|=|\alpha|^{-1}$, we immediately infer that
$| \A'_d(H) | \ge \tfrac{1}{2} | \A_d(H) |$. 
Thus, according to Theorem~\ref{lem:numalg}, we finally have
\[
| \A'_d(H) | > \tfrac{1}{2} 6^{-d(d+1)}H^{d(d+1)} \quad \text{when}\;  H^d \ge 2.
\]
Therefore, we can write 
\[
\begin{array}{l}
    n_1 = \tfrac{1}{2} 6^{-d(d+1)}H^{d(d+1)},\\
    n_2 = \tfrac{1}{2} 6^{-(\sqrt{d}-1)\sqrt{d}} \sqrt{\tfrac{H}{2}}^{(\sqrt{d}-1)\sqrt{d}},
\end{array}
\]
such that $N_1$ and $N_2$ are positive integers greater than
$n_1$ and $n_2$, respectively.
This complete the proof of Main Result B.

\begin{remark}
There are some conditions here to make the result hold.
On the one hand, we need $n_1\ge \ell$ and $n_2\ge 1$, which imply the assumptions $N_1 \ge \ell + 1$ and $N_2 \ge 1$ for Inequality~\eqref{eq:boundL}. The aim is to ensure that we have enough algebraic numbers to construct the linear forms.
    Besides, they can also imply
    the conditions $H_1^{d_1}\ge 2$ and $H_2^{d_2} \ge 2$ required as in
    Lemma~\ref{lem:numalg}.

On the other hand, we have $m \ge 2$, which comes from the setting that
$\ell = \lfloor \tfrac{m}{2}\rfloor$ to ensures that $\ell$ is nonzero.  
However, with the fact that $\lim_{\ell\to 0} \ell\ln\ell = 0$, the upper bound we obtained is also clearly valid for the case $m=1$. So, for simplicity, we omit this condition in the result.
Actually, the situation of the linear forms containing one term is trivial because $\beta \e ^{\alpha}$ reaches its minimal absolute value when both $|\alpha|$ and $|\beta|$ reach their minimal values, respectively.
 \end{remark}

\bibliographystyle{ACM-Reference-Format}


\end{document}